\documentclass[11pt]{article}
\usepackage{amsmath}
\usepackage{amssymb}
\usepackage{amsthm}
\usepackage{graphicx}
\usepackage{microtype}
\usepackage{hyperref}

\newtheorem{theorem}{Theorem}[section]
\newtheorem{lemma}[theorem]{Lemma}

\newtheorem{proposition}[theorem]{Proposition}
\newtheorem{corollary}[theorem]{Corollary}
\theoremstyle{definition}
\newtheorem{question}[theorem]{Question}

\newtheorem{definition}[theorem]{Definition}
\newtheorem{example}[theorem]{Example}
\newtheorem{thmx}{Theorem}

\newtheorem{remark}[theorem]{Remark}

\def\d{\displaystyle}

\def\g{\gamma}
\def\x{\xi}
\def\p{\partial}
\def\L{\mathcal{L}}
\def\P{\mathcal{P}}

\def\U{\mathcal{ULFP}}
\begin{document}
\title{\textbf{Uniform local finiteness of the curve graph via subsurface projections}}
\author{Yohsuke Watanabe}
\maketitle

\begin{abstract}
The curve graphs are not locally finite. In this paper, we show that the curve graphs satisfy a property which is equivalent to graphs being uniformly locally finite via Masur--Minsky's subsurface projections. 
As a direct application of this study, we show that there exist computable bounds for Bowditch's slices on tight geodesics, which depend only on the surface. 
As an extension of this application, we define a new class of geodesics, weak tight geodesics, and we also obtain a computable finiteness statement on the cardinalities of the slices on weak tight geodesics.

\end{abstract}
\vspace{0.1cm}

\textbf{Mathematics Subject Classifications (2010).} 57M99

\textbf{Key words.} Curve complex, Subsurface projections, Tight geodesics, Uniform local finiteness property, Weak tight geodesics.

\section{\textbf{Introduction}}
Suppose $S=S_{g,n}$ is a compact surface of $g$ genus and $n$ boundary components. Let $\x(S_{g,n})=3g+n-3$ be the complexity of $S_{g,n}$. In this paper, we assume curves are simple, closed, essential, and non--peripheral. In \cite{HAR}, Harvey introduced the curve complex $C(S)$; suppose $\x(S)\geq 1.$ The vertices are isotopy classes of curves and the simplices are collections of curves that can be mutually realized to be disjoint. If $\x(S)=1$, any two distinct curves intersect at least once; the simplices are collections of curves that mutually intersect once if $S=S_{1,1}$ and twice if $S=S_{0,4}$. 

The main object in this paper will be the $1$--skeleton of $C(S)$, the curve graph; it is known to be path--connected and locally infinite. We put the simplicial metric (distance one for every edge) on the curve graph, then it is an infinite diameter graph with this metric \cite{MM1}. We denote the metric by $d_{S}$; for instance, let $x,y\in C(S)$, then $d_{S}(x,y)$ is the distance between $x$ and $y$, i.e., the length of a geodesic connecting $x$ and $y$.

The main result of this paper is to overcome the fact that the curve graph is locally infinite. This is a new observation in the studies in the curve complex, which is stated as Theorem \ref{br}. Roughly speaking, we show that the curve graph is indeed ``uniformly locally finite'' under the subsurface projections defined by Masur--Minksy \cite{MM2}. We refer the reader to $\S 2$ for the definition of subsurface projections. In order to motivate this study, we first observe Definition \ref{lfp}, Example \ref{firstsection} and Proposition \ref{LK} where we discuss in the setting of graphs.
Now, we define some terminologies on graphs ; let $X$ be a graph and $x \in X$. The valency at $x$ is the number of edges coming out of $x$, and we denote the valency at $x$ by $V(x)$. We say $X$ is locally finite if $V(x)<\infty$ for all $x\in X$, and $X$ is uniformly locally finite if there exists $P>0$ such that $V(x)\leq P$ for all $x\in X$. If $X$ is uniformly locally finite, then we let $V(X)=\max \{V(x)| x\in X\}$. In this paper, we assume a graph is path--connected and its diameter is infinite.

\begin{definition}[Uniform local finiteness property]\label{lfp}
Let $X$ be a graph with the simplical metric $d_{X}$. We say $X$ satisfies the uniform local finiteness property if there exists a computable $N_{X}(l,k)$ for any $l>0$ and $k>1$ such that the following holds. If $A\subseteq X$ such that $|A|>N_{X}(l,k)$, then there exists $A'\subseteq A$ such that $|A'|= k$ and $d_{X}(x,y)>l \text{ for all } x,y\in A'.$
\end{definition}

\begin{remark}
Note that $X$ in Definition \ref{lfp} is treated as the set of vertices rather than the graph; therefore, $A$ is a subset of the vertices of $X$. We usually treat graphs in this way throughout the paper; the interpretations will be clear from the context. Also, we often abbreviate the uniform local finiteness property as $\U$ in the rest of the paper. Lastly, we refer the reader to Proposition \ref{LK} for the reason why we call the above property the uniform local finiteness property.
\end{remark}

We consider the following examples to obtain the flavor of $\U$.

\begin{example}\label{firstsection}
Suppose $X$ is a graph. 
\begin{enumerate}
\item If $X$ is not locally finite, i.e., locally infinite, then $X$ does not satisfies $\U$. This is because we can take $x\in X$ such that $V(x)=\infty$, and consider the set of vertices which are distance $1$ apart from $x$; this set contains infinitely many element, but its diameter is bounded by $2.$

\item If $X$ is locally finite, but not uniformly locally finite, then $X$ does not satisfies $\U$. The reason is similar to the above.

\item Suppose $X$ is a tree such that $V(X)=2$. See Figure \ref{99}. We can think of the vertices of $X$ as $\mathbb{Z}$ in $\mathbb{R}$; if we have sufficiently many integers, we can find $k$ integers which are mutually more than $l$ apart in $\mathbb{R}$, i.e., $X$ satisfies $\U$. In particular, taking $N_{X}(l,k)=(l+2)k$ suffices.
\begin{figure}[htbp]
 \begin{center}
  \includegraphics[width=100mm]{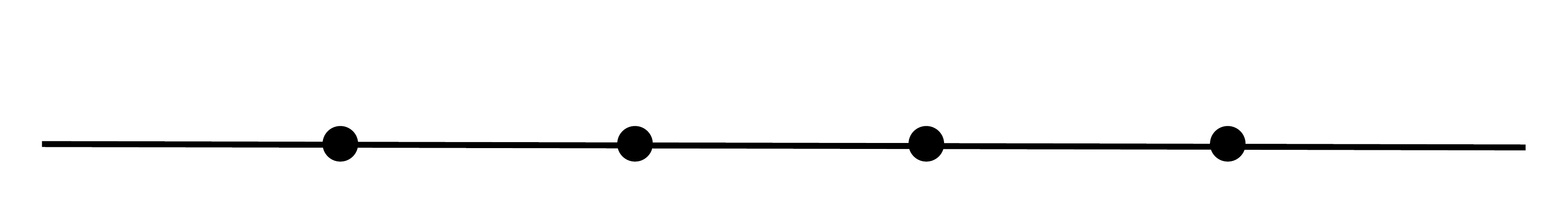}
 \end{center}
 \caption{A tree of valency $2$. }
 \label{99}
\end{figure}

\item If $X$ is uniformly locally finite, then $X$ satisfies $\U$. We prove it by showing the following equivalent statement.

Let $A\subseteq X$. If there is no $A'\subseteq A$ such that $|A'|= k$ and $d_{Z}(x,y)>l$ for all $x,y\in A'$, then there exists a computable $N_{X}(l,k)$ such that $|A|\leq N_{X}(l,k)$.
\begin{proof}
We prove the above statement by the induction on $k$; the proof works for all $l$.

\underline{Base case, $k=2$}: First, we define
\begin{itemize}
\item $N_{r}(x)=\{y\in X| d_{S}(x,y)\leq r\}$, the radius $r$ ball centered at $x$.
\item $C_{r}(x)=\{y\in X| d_{S}(x,y)=r\}$, the radius $r$ circle centered at $x$.
\end{itemize}
We observe that $\d |C_{i+1}(x)|\leq V(X)\cdot |C_{i}(x)|$ for all $i$. Therefore, we have $$ \d |N_{r}(x)|= \Bigg|\bigcup_{0\leq i\leq r} C_{i}(x)\Bigg| =  \sum_{i=0}^{r} |C_{i}(x)| \leq \sum_{i=0}^{r} V(X)^{i}.$$

Now, we prove the statement; since $A$ does not contain $2$ elements which are more than $l$ apart, we observe 
$$\d A\subseteq N_{l}(x) \text{ for some }x\in X. \Longrightarrow \d |A|\leq  \sum_{i=0}^{l} V(X)^{i}.$$

\underline{Inductive step, $k>2$}: Suppose $A$ does not contain $k$ elements which are mutually more than $l$ apart. Take $k-1$ elements $\{x_{i}\}_{i=1}^{k-1}\subseteq A$, which are mutually more than $l$ apart. (If there is no such elements, we are done by the inductive hypothesis.) Furthermore, we take $\{N_{l}(x_{i})\}_{i=1}^{k-1};$ since $A$ does not contain $k$ elements which are mutually more than $l$ apart, we observe  
$$\d A \subseteq \bigcup_{1\leq i \leq k-1} N_{l}(x_{i}).\Longrightarrow  |A|\leq(k-1) \cdot \sum_{i=0}^{l} V(X)^{i}.$$

\end{proof}

\end{enumerate}
\end{example}

We notice 
\begin{proposition}\label{LK}
$X$ is a uniformly locally finite graph. $\Longleftrightarrow$ $X$ satisfies $\U$.
\end{proposition}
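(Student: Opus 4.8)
The plan is to prove the two implications separately, and in fact both directions have essentially already been established inside Example \ref{firstsection}; the work here is mainly to assemble those pieces and to phrase the negative direction as a contrapositive.

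For the direction ($\Longrightarrow$), suppose $X$ is uniformly locally finite. This is precisely the content of the fourth item of Example \ref{firstsection}: the induction on $k$ carried out there produces the explicit computable function $N_{X}(l,k)=(k-1)\sum_{i=0}^{l} V(X)^{i}$, which witnesses $\U$. So this direction requires no new argument beyond invoking that computation, and in particular the bound it yields depends only on $V(X)$, $l$, and $k$.

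For the direction ($\Longleftarrow$), I would argue by contrapositive: assuming $X$ is \emph{not} uniformly locally finite, I will exhibit, for every candidate bound, a subset violating the defining condition of $\U$. A graph that is not uniformly locally finite falls into exactly one of two cases: either it is locally infinite, or it is locally finite but has unbounded valency. The first two items of Example \ref{firstsection} handle these two cases respectively. In both, the underlying idea is identical: fix $l=2$ and $k=2$; given any proposed value $N_{X}(2,2)$, choose a vertex $x$ with $V(x)>N_{X}(2,2)$ (possible in both cases, since the valencies are either infinite or unbounded), and set $A=\{x\}\cup C_{1}(x)$. Then $|A|=V(x)+1>N_{X}(2,2)$, yet any two elements of $A$ lie at distance at most $2$ through $x$, so $A$ contains no pair of points more than $l=2$ apart, let alone the required subset $A'$ of size $k=2$. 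Hence no computable $N_{X}(l,k)$ can exist, and $\U$ fails.

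Since neither step involves a genuinely new difficulty, there is no serious obstacle; the only point deserving care is the bookkeeping in the contrapositive, namely verifying that the single choice $l=k=2$ already defeats \emph{every} proposed bound simultaneously, which is exactly what the two cases above guarantee. Assembling the forward implication from the fourth item of Example \ref{firstsection} with this contrapositive then completes the equivalence.
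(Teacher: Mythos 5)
Your proof is correct and takes essentially the same route as the paper: the forward implication is exactly the fourth item of Example \ref{firstsection}, and the converse is the contrapositive argument of the first two items (a vertex of large valency together with its neighbors defeats any candidate bound already at $l=k=2$). The paper's own proof simply cites those items of Example \ref{firstsection}; you have merely spelled out the bookkeeping it leaves implicit.
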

\begin{proof} The statement follows from Example \ref{firstsection}.

$(\Longrightarrow)$: It follows by the forth example.

$(\Longleftarrow)$: It follows by the first and the second examples.
\end{proof}

The following is the main result of this paper. 

\begin{theorem}[$\U$ of the curve graph via subsurface projections]\label{br}
Suppose $\x(S)\geq 1$. There exists a computable $N_{S}(l,k)$ for any $l>0$ and $k>1$ such that the following holds. If $A\subseteq C(S)$ such that $|A|>N_{S}(l,k)$, then there exists $A'\subseteq A$ and $Z\subseteq S$ such that $|A'|= k$ and $d_{Z}(x,y)>l$ for all $x,y\in A'$.
\end{theorem}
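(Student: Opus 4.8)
The plan is to prove Theorem \ref{br} by induction on the complexity $\xi(S)$, mirroring the ball–counting scheme of Example \ref{firstsection}(4) but replacing the intrinsic metric $d_S$ (under which $C(S)$ fails to be uniformly locally finite) by the family of subsurface projection pseudometrics $d_Z$. As in that example, it is convenient to prove the contrapositive form: if no $A' \subseteq A$ of size $k$ is pairwise more than $l$ apart in any single $d_Z$, then $|A|$ is bounded by a computable function $N_S(l,k)$.

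First I would reduce to the bounded–diameter case. If $A$ contains $k$ curves that are pairwise more than $l$ apart in $C(S)$ itself, we are done with $Z = S$. Otherwise, exactly as in the inductive step of Example \ref{firstsection}(4), a maximal pairwise more-than-$l$ subset has size at most $k-1$, so its $l$--balls cover $A$ and some ball contains at least $|A|/(k-1)$ elements; replacing $A$ by this sub-collection, we may assume $\mathrm{diam}_S(A) \le 2l$ and fix a basepoint $a_0 \in A$. The content of the theorem is now to show that a curve-graph ball of bounded radius, although infinite, cannot contain too many curves without some proper subsurface spreading $k$ of them apart --- this is precisely where local infiniteness is converted into projection data.

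The inductive engine is the following dichotomy, applied to the curves $x \in A \setminus \{a_0\}$: split $A$ according to whether $x$ is disjoint from $a_0$ or crosses it. The curves disjoint from $a_0$ are vertices of $C(W)$ for the complementary subsurface $W = S \setminus a_0$, which satisfies $\xi(W) < \xi(S)$; since subsurface projection to any $Z \subseteq W$ is computed intrinsically in $W$, a pigeonhole over the components of $W$ followed by the inductive hypothesis produces the desired $A'$ and $Z \subseteq W \subseteq S$ once this sub-collection exceeds the (computable) bound coming from induction. For the curves that cross $a_0$, I would use the annular projection $d_{a_0}$: the curve complex of the annulus with core $a_0$ is quasi-isometric to $\mathbb Z$, hence uniformly locally finite, so by Proposition \ref{LK} it satisfies $\U$. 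If the annular coordinates $\{\pi_{a_0}(x)\}$ realize more than $N_{\mathbb Z}(l,k)$ spread-out values we extract $A'$ with pairwise $d_{a_0} > l$ and finish with $Z$ the annulus about $a_0$; otherwise a pigeonhole concentrates a definite fraction of the crossing curves into one annular coordinate, after which their differences must be recorded by projections to proper subsurfaces meeting $a_0$, to which the induction is again applied.

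The main obstacle is this last step: consolidating the pairwise ``dominant'' subsurfaces into a single $Z$ that spreads $k$ curves simultaneously. The difficulty is that distinct curves in a bounded $d_S$-ball may differ in projections to infinitely many different proper subsurfaces, so a naive pigeonhole over subsurfaces is unavailable. The tools I would use to overcome this are the Bounded Geodesic Image theorem \cite{MM2}, which forces every proper subsurface carrying a large projection between $a_0$ and $x$ to have its boundary appear within distance one of the (length $\le 2l$) geodesic from $a_0$ to $x$, together with Behrstock's inequality, which totally orders any family of pairwise overlapping subsurfaces on which a common curve projects far. Using these to reduce the relevant subsurfaces to a nested/ordered family of strictly smaller complexity, and then feeding a pigeonholed sub-collection into the inductive hypothesis, should localize the spreading to a single $Z$; the base case $\xi(S)=1$ (the Farey graph) is handled by the same scheme, where the only proper subsurfaces are annuli and the required spreading of annular coordinates is the standard continued–fraction/twisting estimate. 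Throughout, keeping each bound (the ball counts, the pigeonhole fractions, and the BGI and Behrstock constants) explicit is what yields a computable $N_S(l,k)$.
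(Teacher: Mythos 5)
Your skeleton matches the paper's: you pass to the contrapositive, induct on complexity, use $\P(l,k,S)$ to cover $A$ by $k-1$ balls of radius $l$, and split the ball around $a_0$ into curves disjoint from $a_0$ (handled by the complexity induction, exactly as in the paper's base case) and curves crossing $a_0$. The gap is in your treatment of the crossing curves, which is the actual crux of the theorem. After pigeonholing them into one annular coordinate about $a_0$, you assert that ``their differences must be recorded by projections to proper subsurfaces meeting $a_0$, to which the induction is again applied.'' Neither half of this can be carried out as stated. First, there is no such dichotomy: curves crossing $a_0$ with nearly equal twisting about $a_0$ may differ only in subsurfaces disjoint from $a_0$, or may fail to be far apart in any single subsurface at all --- the theorem is a counting statement, not a claim that distinct curves are separated by some projection (two disjoint distinct curves have all common projections within uniformly bounded distance of each other). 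Second, and more fundamentally, the inductive hypothesis applies to subsets of $C(W)$ with $\x(W)<\x(S)$, but curves crossing $a_0$ are not vertices of any lower-complexity curve complex; you could only feed in their projections, and the projection map is badly non-injective --- controlling that multiplicity is exactly the problem you started with. The appeal to Behrstock's inequality names a tool but not an argument: it constrains the projections of a single curve to overlapping subsurfaces, and you do not show how it consolidates $k$ curves into one spreading subsurface, nor how the process terminates in a computable bound. A smaller error along the way: the annular curve graph is \emph{not} uniformly locally finite (it is locally infinite), so Proposition \ref{LK} does not apply to it; only the coarse $\mathbb{Z}$-like separation of twisting coordinates is usable.

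The paper closes exactly this gap with a device absent from your plan: an inner induction on the distance from the ball center, powered by the sphere-pushing Lemma \ref{co}. Given $B \subseteq C_{i}(x)$, one takes tight geodesics from $x$ to each point of $B$ and lets $B' \subseteq C_{1}(x)$ be their first vertices; tightness (contiguity of the footprint, Lemma 4.10 of \cite{MM2}) together with the Bounded Geodesic Image Theorem gives $d_{Z}(b',b)\leq M$ for every $Z \subseteq S-x$ (respectively $Z=R(x)$ when $\x(S)=1$), so if $B$ satisfies $\P(l,k,Z)$ then $B'$ satisfies $\P(l+2M,k,Z)$, and moreover $B\subseteq \bigcup_{y\in B'} C_{i-1}(y)$. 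Hence $|A\cap C_{i}(x)| \leq |B'|\cdot \max_{y}|A\cap C_{i-1}(y)|$, where $|B'|$ is bounded by the complexity induction (these are curves disjoint from $x$) or, when $\x(S)=1$, by the twisting estimate of Lemma \ref{min} applied to $C_{1}(x)=\{T_{x}^{i}(y)\}_{i\in\mathbb{Z}}$, and the fibers are handled by re-centering at $y$ and recursing on $i$. Note that this argument only ever uses subsurfaces disjoint from, or annular about, the \emph{current} center --- no consolidation of overlapping subsurfaces is needed, which is why the paper never invokes Behrstock's inequality. Without this re-centering distance induction (or a genuine substitute for it), your proposal does not yield a bound.
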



As an application of a special case of Theorem \ref{br}, we study some finiteness statements on tight geodesics. Here, we recall the results regarding this study prior to this paper and some applications of them. We refer the reader to $\S 2$ for the definition of tight geodesics.

Tight geodesics were introduced by Masur--Minsky; they proved that there exists at least one and only finitely many tight geodesics between any pair of curves  \cite{MM2}. 

Bowditch defined a slice on all tight geodesics between a pair of (the sets of) curves, and showed that there exists a bound on the slice, which depends only on the surface. However, since his proof involves a geometric limit argument by using $3$--dimensional hyperbolic geometry, his bound was not computable \cite{BO2}. 

Schackleton showed that there exists a computable bound on the slice, which depends on the surface and the intersection number of a given pair of curves \cite{SHA}. 

Independently, Webb \cite{WEB2} and the author \cite{W1} showed that there exists a computable bound on the slice, which depends on the surface and the distance between a given pair of curves. By using this result, Webb showed that there exists a computable bound on the slice, which depends only on the surface. See Theorem \ref{webb's}. We note that his approach was combinatorial and constructive.

These studies of tight geodesics have many applications, including Thurston's ending lamination conjecture \cite{BCM} \cite{M1}, the asymptotic dimension of the curve graph \cite{BF}, and the stable lengths of pseudo--Anosov elements \cite{BO2} \cite{SHA} \cite{WEB2}.

Before we state our result, we recall an important geometric property of the curve graph. We first recall the following definition. 

\begin{definition}[Gromov]
We say a geodesic metric space is a $\delta$--hyperbolic space if every geodesic triangle in the space has the following property; any edge of a geodesic triangle is contained in the $\delta$--neighborhoods of other two edges. For instance, trees are $0$--hyperbolic spaces.
\end{definition}

It is known that the curve graph is $\delta$--hyperbolic; it was first proved by Masur--Minsky \cite{MM1}. By different approaches, Bowditch \cite{BO1} and Hamenst\"adt \cite{HAM} also proved this result. Recently, the hyperbolicity has been improved so that it is uniform for all surfaces. This result was independently proved by Aougab \cite{AOU}, by Bowditch \cite{BO4}, by Clay--Rafi--Schleimer \cite{CRS}, and by Hensel--Przytycki--Webb \cite{HPW}. In the rest of this paper, we let $\delta$ denote the hyperbolicity constant of the curve graph. 

Now, we review the definition of a slice from \cite{BO2}. Suppose $x\in C(S)$; we let $N_{i}(x)=\{y\in C(S)|d_{S}(x,y)\leq i\}.$
\begin{definition}[\cite{BO2}]
Suppose $a,b\in C(S)$, $A,B\subseteq C(S)$, and $r>0$. 
\begin{itemize}
\item Let $\L_{T}(a,b)$ be the set of all tight geodesics between $a$ and $b$.

\item Let $\d G(a,b)=\{v\in C(S)| v\in g\in \L_{T}(a,b)\}$.

\item Let $\d G(A,B)=\bigcup_{a\in A, b\in B} G(a,b)$ and $G(a,b;r)=G(N_{r}(a), N_{r}(b))$. 
\end{itemize}
\end{definition}

The following result is due to Bowditch \cite{BO2} without computable bounds. Here, we state the recent result by Webb; he also showed that his bounds are sharp. Suppose $a,b\in C(S)$; we let $g_{a,b}$ denote a geodesic between $a$ and $b$.

\begin{theorem}[\cite{WEB2}]\label{webb's}
Suppose $\x(S)\geq 2$. Let $a,b\in C(S)$, $r\geq 0$, and $K$ be a uniform constant.
\begin{enumerate}
\item  If $c\in g_{a,b}$, then $|G(a,b)\cap N_{\delta}(c)|\leq K^{\x(S)}.$

\item Suppose $d_{S}(a, b)\geq2r+2j+1$, where $j =10\delta +1$. If $c\in g_{a,b}$ and $c \notin N_{r+j}(a)\cup N_{r+j}(b)$, then $|G(a,b;r)\cap N_{2\delta}(c)|\leq K^{\x(S)}.$ 
\end{enumerate}
\end{theorem}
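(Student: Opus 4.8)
The plan is to deduce both bounds from the special case $k=3$ of Theorem \ref{br}, using the Bounded Geodesic Image Theorem of \cite{MM2}, with its uniform constant $M$, as the bridge between the metric $d_{S}$ and the projection metrics $d_{Z}$. Write $A=G(a,b)\cap N_{\delta}(c)$ for the first statement and $A=G(a,b;r)\cap N_{2\delta}(c)$ for the second. In each case every vertex of $A$ lies on a tight geodesic with prescribed endpoints and, by the triangle inequality, $d_{S}(x,y)\leq 2\delta$ (resp. $\leq 4\delta$) for all $x,y\in A$. The strategy is to fix $l$ larger than both the $d_{S}$--diameter of $A$ and a multiple of $M$, and then to show that for this $l$ no three-element subset of $A$ can have pairwise $d_{Z}$--distance exceeding $l$ in a common subsurface $Z$; Theorem \ref{br} then forces $|A|\leq N_{S}(l,3)$. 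Since one expects $N_{S}(l,3)$ to be exponential in $\x(S)$ with the fixed input $l$ absorbed into the base (just as the uniformly locally finite case of Example \ref{firstsection} gives $(k-1)\sum_{i} V(X)^{i}$), this is exactly a bound of the form $K^{\x(S)}$.

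First I would record the decisive consequence of $A$ lying in a small $d_{S}$--ball: once $l>2\delta$ (resp. $l>4\delta$), the subsurface $Z$ produced by Theorem \ref{br} cannot be $S$ itself, because all $d_{S}$--distances inside $A$ fall below $l$; hence $Z$ is a proper subsurface and $\partial Z$ is defined. Next comes the clustering lemma, which is the heart of the matter. Fix a proper $Z$ and a tight geodesic $g$ from $a$ to $b$. Every vertex of $g$ with empty projection to $Z$ is disjoint from $\partial Z$, hence lies within $d_{S}$--distance one of a fixed component of $\partial Z$; since $g$ is geodesic, any two such vertices differ in position by at most two, so they occupy a single block of at most three consecutive positions. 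Applying the Bounded Geodesic Image Theorem to the two sub-geodesics flanking this block shows that every vertex $x$ of $g$ crossing $Z$ satisfies $d_{Z}(a,x)\leq M$ or $d_{Z}(x,b)\leq M$. Running this over all tight geodesics from $a$ to $b$ confines $\pi_{Z}(G(a,b))$ to the $M$--neighborhood of the two-point set $\{\pi_{Z}(a),\pi_{Z}(b)\}$, so any subset with pairwise $d_{Z}$--distance greater than $2M$ has at most two elements. Choosing $l>\max\{2\delta,2M\}$ and $k=3$ contradicts the output of Theorem \ref{br}, which yields the first bound.

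For the second statement the tight geodesics carrying vertices of $A$ have endpoints roaming over $N_{r}(a)$ and $N_{r}(b)$, so the clustering must be made uniform in the endpoints; this is the main obstacle. The device is the hypothesis $d_{S}(a,c),d_{S}(b,c)>r+j$ with $j=10\delta+1$, deployed through hyperbolicity as follows. If $\partial Z$ lies at $d_{S}$--distance more than $4\delta+1$ from $c$, then a geodesic joining any two points of $A$ has length at most $4\delta$ and meets no vertex disjoint from $Z$, so the Bounded Geodesic Image Theorem already pins all of $\pi_{Z}(A)$ into one $M$--cluster. If instead $\partial Z$ is within $4\delta+1$ of $c$, then $\partial Z$ sits at distance greater than $r+6\delta$ from $a$ and from $b$; by $\delta$--thinness a geodesic between any two endpoints of $N_{r}(a)$ stays within a bounded neighborhood of $a$ and hence avoids $\partial Z$, so the Bounded Geodesic Image Theorem gives $d_{Z}(a',a'')\leq M$ for all such endpoints, and symmetrically near $b$. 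Feeding this into the flanking-block argument yields $d_{Z}(a',x)\leq M$ or $d_{Z}(x,b')\leq M$ with the endpoint projections confined to two $M$--clusters independent of the chosen geodesic, so again at most two vertices of $A$ are pairwise $d_{Z}$--far once $l>\max\{4\delta,3M\}$. The slack $10\delta$ in $j$ is exactly what keeps every one of these distance estimates positive, and a final appeal to Theorem \ref{br} with $k=3$ produces the bound $K^{\x(S)}$.
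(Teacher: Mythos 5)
The statement you are proving is Webb's theorem, which this paper only \emph{cites} from \cite{WEB2}; the paper never proves it, and it cannot be proved by the route you take. Your argument --- clustering the projections of slice vertices near $\pi_{Z}(a)$ and $\pi_{Z}(b)$ via tightness and the Bounded Geodesic Image Theorem, then invoking Theorem \ref{br} with $k=3$ --- is essentially the paper's own proof of its \emph{weaker} Theorem \ref{geo} (via Lemma \ref{ti}, Corollary \ref{c} and Lemma \ref{cno3}). That route terminates in the bounds $N_{S}(2M,3)$ and $N_{S}(4M,3)$, and the decisive final claim of your proposal --- that $N_{S}(l,3)$ with $l$ fixed ``is exactly a bound of the form $K^{\x(S)}$'' --- is false. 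The quantity $N_{S}(l,k)$ is built recursively over complexity by $N_{S}(l,k)=(2N_{S'}(L,k))^{l+1}$ with $L=l+2M$, so each complexity step both raises the previous bound to a power of order $l$ and enlarges $l$ by $2M$; the paper's own growth estimate at the end of \S 5 gives only $N_{S}(l,k)\leq N_{S_{0,4}}(\x(S)\cdot L,k)^{\left((2\x(S)L)^{\x(S)}\right)}$, whose exponent is itself exponential in $\x(S)$. That is doubly exponential in the complexity, not $K^{\x(S)}$ for a uniform constant $K$. The paper is explicit about this: it calls its bounds weaker than Webb's, notes that Webb's sharp bound comes from a different combinatorial and constructive argument, and poses Question \ref{QQ2} precisely about the gap between the two kinds of bounds. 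So your proposal at best reproves Theorem \ref{geo}, not Theorem \ref{webb's}.

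A smaller but genuine flaw in the clustering step: you derive the ``block of at most three consecutive positions'' for vertices with empty projection merely from the geodesic property (any two such vertices are within distance $2$ through $\partial Z$), and then apply the Bounded Geodesic Image Theorem to the two sub-geodesics \emph{flanking} that block. This misses a vertex $x$ that crosses $Z$ but sits inside the block: if the footprint is $\{v_{i},v_{i+2}\}$ and $x=v_{i+1}$, then $x$ lies in neither flanking segment, and the theorem applies to neither side of $x$, since each side contains a vertex with empty projection. What rescues the argument is tightness: Lemma 4.10 of \cite{MM2} says the footprint of a tight multigeodesic is \emph{contiguous}, so it lies entirely to one side of any crossing vertex $x$, and the Bounded Geodesic Image Theorem applies to the other side --- this is exactly how the paper's Lemma \ref{ti} argues. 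Your case analysis in the second statement (splitting on the distance from $\partial Z$ to $c$) differs from the paper's $4$--gon argument in Corollary \ref{c} and may well be repairable; but even granting every such detail, the conclusion remains a bound of the form $N_{S}(\cdot,3)$, which does not yield $K^{\x(S)}$.
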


\begin{remark}
We observe that $N_{\delta}(c)$ intersects all (tight) geodesics between $a$ and $b$, and observe that $N_{2\delta}(c)$ intersects all (tight) geodesics between $N_{r}(a)$ and $N_{r}(b)$ by the hyperbolicity of the curve graph.
\end{remark}

We also show that there exist computable bounds for the slices, which depend only on the surface. Our bound will be weaker, yet the proof will be simpler as it is a direct corollary of a special case of Theorem \ref{br}. Also, we note that the hypothesis of the second statement will be weaker, i.e., $j$ will be $3\delta +2$ instead of $10\delta +1$. Furthermore, we also take care of the case when $\x(S)=1.$ Recall $N_{S}(l,k)$ from Theorem \ref{br}. We prove

\begin{theorem} \label{geo}
Suppose $\x(S)\geq1$. Let $a,b\in C(S)$, $r\geq 0$, and $M$ be from Theorem \ref{BGIT}.
\begin{enumerate}
\item  If $c\in g_{a,b}$, then $|G(a,b)\cap N_{\delta}(c)|\leq N_{S}(2M,3).$

\item Suppose $d_{S}(a, b)\geq2r+2j+1$, where $j =3\delta+2$. If $c\in g_{a,b}$ and $c \notin N_{r+j}(a)\cup N_{r+j}(b)$, then $|G(a,b;r)\cap N_{2\delta}(c)|\leq N_{S}(4M,3).$
\end{enumerate}

\end{theorem}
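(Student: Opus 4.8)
The plan is to deduce Theorem \ref{geo} as a direct corollary of Theorem \ref{br}, combined with the key input from Theorem \ref{BGIT} (the Behrstock/bounded geodesic image type result, which supplies the uniform constant $M$). The central idea is a contrapositive/pigeonhole argument: if one of the slices $G(a,b)\cap N_\delta(c)$ were too large, then applying the $\U$ of the curve graph (Theorem \ref{br}) would produce three vertices $x,y,z$ in the slice and a subsurface $Z$ on which their pairwise $Z$--projections are large; this large projection would then be shown to contradict the fact that all these vertices lie on tight geodesics passing uniformly close to the common vertex $c$.

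For part (1), I would argue as follows. Suppose for contradiction that $|G(a,b)\cap N_\delta(c)| > N_S(2M,3)$. By Theorem \ref{br} applied with $l=2M$ and $k=3$, there exist three vertices $x,y,z$ in this slice and a subsurface $Z\subseteq S$ with $d_Z(\cdot,\cdot)>2M$ pairwise. The key point is that each of $x,y,z$ lies on some tight geodesic between $a$ and $b$ and is within distance $\delta$ of the single vertex $c$. I would then invoke the bounded geodesic image theorem (Theorem \ref{BGIT}, with constant $M$): since each vertex lies within $\delta$ of $c$ and on a tight geodesic from $a$ to $b$, the $Z$--projections of all of $x,y,z$ must be uniformly close to a common reference projection (namely the projection coming from the tight geodesic, or from $a$ and $b$ themselves). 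Quantitatively, each $d_Z$ of the three points from this common reference is bounded by $M$, so by the triangle inequality any two of $x,y,z$ satisfy $d_Z(\cdot,\cdot)\leq 2M$, contradicting $d_Z(\cdot,\cdot)>2M$. The choice $l=2M$ is exactly calibrated to this triangle-inequality step.

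For part (2), the structure is identical, but the relevant vertices lie on tight geodesics between $N_r(a)$ and $N_r(b)$ rather than between $a$ and $b$, and they lie within $2\delta$ (rather than $\delta$) of $c$. The larger neighborhood radius $2\delta$, together with the slack coming from the $r$--balls around the endpoints, doubles the relevant projection bound, so one applies Theorem \ref{br} with $l=4M$ and $k=3$ to get the threshold $N_S(4M,3)$. The hypothesis $d_S(a,b)\geq 2r+2j+1$ with $j=3\delta+2$ ensures that $c$ is far enough from both endpoint balls that the bounded geodesic image theorem applies cleanly to every tight geodesic realizing the slice, so that the common-reference projection is well defined and the projections of $a$ and $b$ (and the nearby points in $N_r(a),N_r(b)$) all stay uniformly bounded in $Z$.

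The main obstacle, and the step requiring the most care, is verifying that the bounded geodesic image theorem truly applies to all the tight geodesics in the slice simultaneously, so that a single reference projection controls $x,y,z$ at once. Concretely, I must check that each tight geodesic through one of these vertices has the property that $Z$ is not disjoint from some initial or terminal segment causing the projection to be undefined, and that the distance hypotheses in part (2) force $c$ (hence its $\delta$-- or $2\delta$--neighborhood) to avoid the endpoint balls where projections could degenerate. Once this is established, the contradiction with the $>2M$ (resp. $>4M$) separation is immediate from the triangle inequality, and the theorem follows. I would also need to confirm that the $\x(S)=1$ case is handled, since Theorem \ref{webb's} excluded it; here the curve graph on $S_{1,1}$ or $S_{0,4}$ has no proper essential subsurfaces supporting projections, so the relevant $Z$ must be $S$ itself, and the argument reduces to the $d_S$ separation directly.
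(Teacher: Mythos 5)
Your proposal follows the paper's broad strategy (apply Theorem \ref{br} with $k=3$ and $l=2M$, resp.\ $4M$, then derive a contradiction using Theorem \ref{BGIT} on tight geodesics), but the central step is wrong as stated. You claim the $Z$--projections of the three vertices $x,y,z$ are all within $M$ of a \emph{single} common reference projection, so that any two are within $2M$ of each other by the triangle inequality. That is false: for an element $x$ of the slice lying on a tight geodesic from $a$ to $b$, the correct statement (the paper's Lemma \ref{ti}) is a dichotomy, $d_{Z}(a,x)\leq M$ \emph{or} $d_{Z}(x,b)\leq M$, and in the interesting case $\pi_{Z}(a)$ and $\pi_{Z}(b)$ are far apart in $C(Z)$ --- Theorem \ref{BGIT} does not bound $d_{Z}(a,b)$, because a geodesic from $a$ to $b$ may contain a vertex disjoint from $Z$. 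So the projections of the slice lie in the union of \emph{two} $M$--balls, $N_{M}^{C(Z)}(\pi_{Z}(a))\cup N_{M}^{C(Z)}(\pi_{Z}(b))$, not in one ball; the contradiction requires a pigeonhole step: among three vertices, two must land in the same ball, and those two have projections at most $2M$ apart. This is exactly why $k=3$ is needed; your single-reference argument would ``prove'' the stronger bound $N_{S}(2M,2)$, which is false. Moreover, the dichotomy itself is not a routine application of Theorem \ref{BGIT}: that theorem only applies when every vertex of the (multi)geodesic projects nontrivially to $Z$. The missing ingredient --- which you flag as ``the main obstacle'' but never resolve --- is tightness: by Lemma 4.10 of Masur--Minsky, the vertices of a tight multigeodesic missing $Z$ form a contiguous block of at most three vertices, so Theorem \ref{BGIT} applies to the subsegment on at least one side of $x$, giving the ``or.'' Without tightness this step fails, which is the entire reason the theorem concerns tight geodesics.

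Two further gaps. For part (2), the doubling to $4M$ is not caused by the $2\delta$--neighborhood; it comes from transferring the bound at an endpoint $A\in N_{r}(a)$ (or $B\in N_{r}(b)$) to $a$ (or $b$) itself, which costs a second application of Theorem \ref{BGIT} to $g_{a,A}$ or $g_{B,b}$ and requires a genuine hyperbolicity argument: in the paper's Corollary \ref{c} one must either show both transfers are possible, or locate a vertex $q\in g_{x,B}^{t}$ with $\pi_{Z}(q)=\emptyset$ and prove $q\in N_{d_{S}(c,b)+3\delta}(b)$ (this is where $j=3\delta+2$ enters), to force every vertex of $g_{a,A}$ to project nontrivially to $Z$. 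Your proposal waves at this with ``the slack from the $r$--balls doubles the bound,'' which is not an argument. Finally, your remark on the $\x(S)=1$ case is incorrect: $S_{1,1}$ and $S_{0,4}$ do have proper essential subsurfaces supporting projections, namely annuli, and annular projections are precisely what drive the paper's machinery in the Farey-graph case; the case is instead handled because every geodesic is tight by definition when $\x(S)=1$, so Lemma \ref{ti} and the two-ball pigeonhole argument go through verbatim.
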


In $\S6$, we define a new class of geodesics, which we call weak tight geodesics. Our bounds in Theorem \ref{geo} are indeed bounds for $M$--weakly tight geodesics. Since tight geodesics are $M$--weakly tight geodesics, see Corollary \ref{2015}, we ask Question \ref{QQ2}, which could fill up some gap between Webb's bounds and our bounds.

Furthermore, we obtain an analog of Theorem \ref{geo} in the setting of weak tight geodesics. We refer the reader to $\S 6$ for the definitions regarding weak tight geodesics and their slices. We show  

\begin{theorem}
\it{Suppose $\x(S)\geq1$. Let $a,b\in C(S)$, $r\geq 0$, and $D\geq M$.}
\begin{enumerate}
\item \it{If $c\in g_{a,b}$, then $ |G^{D}(a,b)\cap N_{\delta}(c)| \leq N_{S}(2D,3).$}

\item \it{Suppose $d_{S}(a, b)\geq2r+2j+1$, where $j =3\delta+2$. If $c\in g_{a,b}$ and $c \notin N_{r+j}(a)\cup N_{r+j}(b)$, then $|G^{D}(a,b;r)\cap N_{2\delta}(c)|\leq N_{S}(2(D+M),3).$}
\end{enumerate}
\end{theorem}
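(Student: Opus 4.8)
The plan is to prove both statements by contradiction, in each case feeding the collection of slice--curves lying near $c$ into Theorem \ref{br} with $k=3$ and then extracting a contradiction from the defining property of $D$--weak tightness via a pigeonhole argument. Recall from $\S 6$ (and Corollary \ref{2015}) that a vertex $v$ lying on a $D$--weakly tight geodesic from $p$ to $q$ satisfies $\min\{d_{Z}(p,v),\,d_{Z}(v,q)\}\leq D$ for every proper subsurface $Z$ met by $v$; this is the only feature of weak tightness the proof uses, and it specializes, when $D=M$, to the bounded geodesic image estimate for tight geodesics, so that taking $D=M$ will recover Theorem \ref{geo}.

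For the first part I would set $A=G^{D}(a,b)\cap N_{\delta}(c)$ and assume $|A|>N_{S}(2D,3)$. By Theorem \ref{br} there are three curves $\{x,y,z\}\subseteq A$ and a subsurface $Z\subseteq S$ with all three pairwise $Z$--projection distances exceeding $2D$; in particular each of $x,y,z$ meets $Z$, and since the ambient $C(S)$--distances among $x,y,z$ are at most $2\delta$ while their pairwise $Z$--distances exceed $2D\geq 2M\geq 2\delta$, the subsurface $Z$ is proper, so the weak tightness of the three geodesics carrying $x,y,z$ applies. Applying the displayed inequality with $(p,q)=(a,b)$ to each of $x,y,z$ and using pigeonhole, two of them, say $x$ and $y$, realize their minimum on the same side, say $d_{Z}(a,x)\leq D$ and $d_{Z}(a,y)\leq D$; then $d_{Z}(x,y)\leq d_{Z}(x,a)+d_{Z}(a,y)\leq 2D$, contradicting $d_{Z}(x,y)>2D$. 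Hence $|A|\leq N_{S}(2D,3)$.

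The second part runs along the same lines, the essential new point being that the curves of $G^{D}(a,b;r)$ sit on $D$--weakly tight geodesics between \emph{varying} endpoints $a_{v}\in N_{r}(a)$ and $b_{v}\in N_{r}(b)$, so the weak tightness estimate is anchored at $a_{v},b_{v}$ rather than at the fixed $a,b$; transferring it to $a,b$ costs $M$ on each side, which is exactly the origin of the bound $N_{S}(2(D+M),3)$. Assuming $A=G^{D}(a,b;r)\cap N_{2\delta}(c)$ has $|A|>N_{S}(2(D+M),3)$, Theorem \ref{br} again yields $\{x,y,z\}$ and a (necessarily proper) $Z$ with pairwise $d_{Z}>2(D+M)$. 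First I would \emph{localize} $\partial Z$ near $c$: since $d_{Z}(x,y)>M$, the bounded geodesic image theorem (Theorem \ref{BGIT}) forces some vertex $w$ of a geodesic $g_{x,y}$ to miss $Z$, so $d_{S}(w,\partial Z)\leq 1$; and because $x,y\in N_{2\delta}(c)$, thinness of the triangle on $\{x,y,c\}$ puts every vertex of $g_{x,y}$, in particular $w$, within $3\delta$ of $c$, giving $d_{S}(c,\partial Z)\leq 3\delta+1=j-1$.

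With $\partial Z$ localized, the hypotheses $d_{S}(a,c)>r+j$ and $d_{S}(b,c)>r+j$ yield $d_{S}(a,\partial Z)>r+1$ and $d_{S}(b,\partial Z)>r+1$, so every vertex of a geodesic from $a$ to $a_{v}$ (respectively from $b$ to $b_{v}$) stays at distance at least $2$ from $\partial Z$, hence meets $Z$; the bounded geodesic image theorem then gives $d_{Z}(a,a_{v})\leq M$ and $d_{Z}(b,b_{v})\leq M$ for each $v\in\{x,y,z\}$. Running the pigeonhole argument once more produces two curves, say $x$ and $y$, with $d_{Z}(a_{x},x)\leq D$ and $d_{Z}(a_{y},y)\leq D$, whence
\[
d_{Z}(x,y)\leq d_{Z}(x,a_{x})+d_{Z}(a_{x},a)+d_{Z}(a,a_{y})+d_{Z}(a_{y},y)\leq D+M+M+D=2(D+M),
\]
contradicting $d_{Z}(x,y)>2(D+M)$ and proving $|A|\leq N_{S}(2(D+M),3)$. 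The hard part will be exactly this second case: making the endpoint transfer rigorous requires both the localization of $\partial Z$ and the calibrated choice $j=3\delta+2$, which is what forces $\partial Z$ outside $N_{r}(a)\cup N_{r}(b)$ and lets the bounded geodesic image theorem apply along $g_{a,a_{v}}$ and $g_{b,b_{v}}$; once these constants are arranged the pigeonhole-plus-triangle-inequality engine closes both cases uniformly in $\x(S)\geq 1$.
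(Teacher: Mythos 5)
Your proposal is correct, and for the first statement it is the paper's argument up to contraposition: the paper checks that $G^{D}(a,b)\cap N_{\delta}(c)$ satisfies $\P(2D,3)$ (every element's projection to any proper $Z$ lies within $D$ of $\pi_{Z}(a)$ or of $\pi_{Z}(b)$, so the projections sit in two balls of diameter $2D$) and then applies Theorem \ref{A}; your pigeonhole-by-contradiction is the same mechanism run backwards through Theorem \ref{br}. For the second statement, though, your endpoint-transfer step is genuinely different from the paper's. The paper proceeds per element, as in Corollary \ref{c}: for a single $x$ on a $D$--weakly tight geodesic from $A\in N_{r}(a)$ to $B\in N_{r}(b)$ and an arbitrary proper $Z$ met by $x$, it splits into cases according to whether the weak-tightness bound holds on both sides or only one, and in the harder case it finds a vertex $q$ of the geodesic from $x$ to $B$ with $\pi_{Z}(q)=\emptyset$ and runs a $4$--gon thinness argument to show $q$ lies near $b$, hence outside $N_{r+2}(a)$, which forces every vertex of $g_{a,A}$ to project to $Z$ and yields $d_{Z}(a,x)\leq D+M$ or $d_{Z}(x,b)\leq D+M$, i.e.\ the property $\P(2(D+M),3)$. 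You instead exploit the contradiction hypothesis: having two slice elements $x,y$ with $d_{Z}(x,y)>M$ lets you localize $\partial Z$ within $3\delta+1$ of $c$ (Theorem \ref{BGIT} applied to $g_{x,y}$ plus thinness of the triangle on $x,y,c$), after which the calibration $j=3\delta+2$ pushes $\partial Z$ more than distance $r+1$ from both $a$ and $b$, so Theorem \ref{BGIT} applies along every $g_{a,a_{v}}$ and $g_{b,b_{v}}$ and gives the two-sided transfer $d_{Z}(a,a_{v})\leq M$ and $d_{Z}(b,b_{v})\leq M$ simultaneously, with no case analysis and no $4$--gon. The trade-off: the paper's route proves the stronger per-element containment (Corollary \ref{c}), valid for every proper subsurface independently of any largeness assumption, which is of independent interest and directly exhibits the property $\P$; your route is shorter and avoids the delicate Case 2 geometry, but its key localization of $\partial Z$ is only available under the contradiction hypothesis, so it proves the cardinality bound and nothing more. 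Both arguments use the hypothesis $c\notin N_{r+j}(a)\cup N_{r+j}(b)$ with exactly the same calibration and arrive at the same bounds $N_{S}(2D,3)$ and $N_{S}(2(D+M),3)$.
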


In the rest of this paper, Theorem \ref{br} and Theorem \ref{geo} will be respectively written as Theorem \ref{A} and Theorem \ref{B}.

\renewcommand{\abstractname}{\textbf{Acknowledgements}}
\begin{abstract}
The author thanks Kenneth Bromberg for suggesting to prove Theorem A when $k=3$, which is the key to prove Theorem \ref{B}. The author also thanks the referee for useful suggestions and comments in the revision process, which improved this manuscript significantly.
\end{abstract}

\section{\textbf{Background}}
In this section, we recall the definitions of tight geodesics and subsurface projections from \cite{MM2}. We note that subsurface projections will be a key machinery in this paper.
\subsection{Tight geodesics}
Suppose $A\subseteq S$; we let $S-A$ denote the complementary components of $A$ in $S$, and we treat $S-A$ as embedded subsurfaces in $S$. 

A multicurve is the set of curves that form a simplex in $C(S)$. Suppose $V$ and $W$ are muticurves; we say $V$ and $W$ fill $S$ if there is no curve in $S-(V\cup W)$, i.e., $S-(V\cup W)$ consists of disks and peripheral annuli. Indeed, $V$ and $W$ always fill the subsurface, $S(V,W)$, which is constructed by the following way; take the regular neighborhood of $V \cup W$ and fill in every complementary component of $V \cup W$, which is a disk or a peripheral annulus. We note that this construction has come from \cite{MM2}.

Suppose $A,B\subseteq C(S)$; we define $\d d_{S}(A,B)=\max \{ d_{S}(a,b)| a\in A,b\in B\}$. We observe the following.
\begin{lemma}
Suppose $\x(S)>1$. Let $V$ and $W$ be multicurves. If $d_{S}(V,W)>2$, then $V$ and $W$ fill $S$. 
\end{lemma}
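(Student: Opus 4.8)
The plan is to prove the contrapositive: I would show that if $V$ and $W$ do not fill $S$, then $d_{S}(V,W)\leq 2$. The starting point is the definition of filling itself. By hypothesis $V$ and $W$ fail to fill $S$, so by definition $S-(V\cup W)$ contains a curve; call it $c$. Since every curve in this paper is assumed to be essential and non--peripheral, $c$ is a genuine vertex of $C(S)$.

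Next I would record what disjointness buys us. Because $c$ lies in $S-(V\cup W)$, it is disjoint from every curve of $V$ and from every curve of $W$ simultaneously. This is where the hypothesis $\x(S)>1$ enters: when $\x(S)>1$, two distinct disjoint curves span an edge of $C(S)$, so disjointness forces distance at most $1$. (This is precisely the property that fails when $\x(S)=1$, where adjacency is governed by minimal intersection rather than by disjointness, which is why the hypothesis must exclude $S_{1,1}$ and $S_{0,4}$.) Hence $d_{S}(c,v)\leq 1$ for every $v\in V$ and $d_{S}(c,w)\leq 1$ for every $w\in W$.

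Finally I would apply the triangle inequality: for arbitrary $v\in V$ and $w\in W$,
\[
d_{S}(v,w)\leq d_{S}(v,c)+d_{S}(c,w)\leq 1+1=2.
\]
Taking the maximum over all such pairs, and recalling that $d_{S}(V,W)$ is defined as exactly this maximum, gives $d_{S}(V,W)\leq 2$, which contradicts $d_{S}(V,W)>2$. This establishes the contrapositive, and hence the lemma.

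I do not anticipate a serious obstacle here; the argument is essentially unwinding the definition of filling together with the definition of adjacency in $C(S)$. The only point that genuinely requires care is the use of $\x(S)>1$: one must invoke the fact that it is disjointness, not low intersection number, that governs edges of the curve graph in this range of complexity, since this is the single step that breaks down in the two low--complexity cases.
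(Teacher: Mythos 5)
Your proof is correct: the paper states this lemma as an observation with no proof at all, and your argument (a curve $c$ in $S-(V\cup W)$ is disjoint from both multicurves, disjointness gives distance at most $1$ when $\x(S)>1$, and the triangle inequality then bounds every pair by $2$) is precisely the standard argument the paper is implicitly relying on. Your remark about where the hypothesis $\x(S)>1$ enters is also the right one, since adjacency in $C(S_{1,1})$ and $C(S_{0,4})$ is defined by intersection number rather than disjointness.
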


Now, we recall the definition of tight geodesics from $\S 4$ of \cite{MM2}.

\begin{definition}[Tight geodesics]\label{e}
Suppose $\x(S)>1$.
\begin{itemize}
\item A multigeodesic is a sequence of multicurves $V_{0},V_{1},V_{2}\cdots V_{k}$ such that $d_{S}(x,y)=|j-i|$ for all $x\in V_{i}$, $y\in V_{j}$, and $i \neq j$. 
\item A tight multigeodesic is a multigeodesic $V_{0},V_{1},V_{2}\cdots V_{k}$ such that $V_{i}=\partial S(V_{i-1},V_{i+1})$ for all $i$. 
\item A geodesic $v_{0},v_{1},v_{2}\cdots v_{k}$  is a tight geodesic if there exists a tight multigeodesic $V_{0},V_{1},V_{2}\cdots V_{k}$ such that $v_{i}\in V_{i}$ for all $i$. 

\end{itemize}
Suppose $\x(S)=1$. Every geodesic is defined to be a tight geodesic. 

\end{definition}

Masur--Minsky showed
\begin{theorem}[\cite{MM2}]\label{MMMM}
There exists a tight geodesic between any pair of curves in $C(S)$.
\end{theorem}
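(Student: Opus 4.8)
The plan is to obtain a tight geodesic by \emph{tightening} an arbitrary geodesic. When $\x(S)=1$ there is nothing to prove, since by Definition \ref{e} every geodesic is already tight and $C(S)$ is connected; so assume $\x(S)>1$. Because $C(S)$ is path--connected, fix a geodesic $w_0,\dots,w_n$ from $a=w_0$ to $b=w_n$ and regard it as a multigeodesic $V_0,\dots,V_n$ with $V_i=\{w_i\}$. The goal is to deform this multigeodesic, keeping its endpoints and length, until the tightness relation $V_i=\partial S(V_{i-1},V_{i+1})$ holds for every interior index $i$; extracting any transversal $v_i\in V_i$ will then produce the desired tight geodesic.

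First I would establish the local replacement lemma that underlies the whole argument: if $V$ and $W$ are multicurves with $d_S(V,W)=2$, then the essential boundary $\partial S(V,W)$ is nonempty, and every component $u$ of it satisfies $d_S(V,u)=d_S(u,W)=1$. Indeed, $d_S(V,W)=2$ produces a curve disjoint from both $V$ and $W$ and isotopic into neither, so $V$ and $W$ do not fill $S$ (compare the filling lemma above), whence $S(V,W)$ is a proper essential subsurface with nonempty essential boundary; each boundary curve $u$ is disjoint from $V\cup W$ and isotopic into neither multicurve, giving $d_S(V,u)=d_S(u,W)=1$. The point of this lemma is that replacing an interior vertex $V_i$ by $\partial S(V_{i-1},V_{i+1})$ leaves every consecutive distance equal to $1$, and hence, the sequence having the same endpoints and length $n$, again yields a multigeodesic.

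With this in hand I would define the tightening operator $T$ on multigeodesics from $a$ to $b$: it fixes the endpoints and sets $(TV)_i=\partial S(V_{i-1},V_{i+1})$ for $0<i<n$. By the local lemma $T$ maps multigeodesics to multigeodesics, and a multigeodesic is tight precisely when it is a fixed point of $T$. It therefore suffices to show that iterating $T$ reaches a fixed point after finitely many steps.

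The termination of this iteration is the main obstacle, and it is a genuinely two--sided issue: retightening $V_i$ alters the filling subsurfaces $S(V_{i-2},V_i)$ and $S(V_i,V_{i+2})$ of its neighbours, so a single tightening step can destroy tightness at adjacent vertices, and one cannot in general tighten the whole sequence in one left--to--right pass. I would control this by tracking the filling subsurfaces $S(V_{i-1},V_{i+1})$ and exhibiting a well--founded quantity --- for instance a monotone (nested) behaviour of these subsurfaces, or equivalently a strictly decreasing nonnegative integer such as a total intersection number with a fixed reference multicurve --- that changes strictly under any nontrivial tightening. Since $S$ has finite complexity $\x(S)$, such a quantity cannot change indefinitely, so the iteration stabilizes; equivalently, a multigeodesic from $a$ to $b$ minimizing this quantity must already be a fixed point of $T$, hence tight. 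Pinning down the correct monotone quantity and verifying its strict decrease at every non--tight step is where the real work lies; granting this, the limiting (or minimizing) multigeodesic is tight by construction, and any transversal through it is a tight geodesic from $a$ to $b$.
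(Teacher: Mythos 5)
This statement is quoted by the paper from \cite{MM2} without proof, so your attempt can only be judged on its own terms, and on those terms it has a genuine gap. Your preparatory work is fine: viewing a geodesic as a multigeodesic and checking that replacing an interior $V_i$ by $\partial S(V_{i-1},V_{i+1})$ again yields a multigeodesic with the same endpoints and length is correct, and it is the right first move. (One small repair: for arbitrary multicurves, $d_S(V,W)=2$ does not by itself produce one curve disjoint from \emph{all} of $V\cup W$, only a disjoint curve for each pair; in your application the witness is the discarded middle multicurve $V_i$ itself, which is disjoint from $V_{i-1}\cup V_{i+1}$, so $S(V_{i-1},V_{i+1})$ is proper and has essential, non--peripheral boundary.)

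The gap is termination, and it is not an implementation detail: it is the entire content of the theorem, and you explicitly assume it (``granting this''). You never exhibit a well--founded quantity, and the candidates you float do not obviously work. Intersection number with a fixed reference multicurve has no reason to decrease strictly under a tightening move: the new $V_i'=\partial S(V_{i-1},V_{i+1})$ is assembled from arcs parallel to $V_{i-1}$ and $V_{i+1}$, so its intersection with a reference curve is controlled by theirs, but is in no way smaller than that of the discarded $V_i$. Nested behaviour of the filled subsurfaces is likewise unsubstantiated: tightening at position $i$ changes $S(V_{i-2},V_i)$ and $S(V_i,V_{i+2})$ with no containment in either direction in general, which is exactly your own observation that a move at $i$ can destroy tightness at $i\pm1$. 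Nor can you fall back on finiteness of the ambient set: the curve graph is locally infinite, so the set of (multi)geodesics between $a$ and $b$ is in general infinite, and ``take a minimizer'' presupposes precisely the well--foundedness you have not supplied. This difficulty is why the existence proof in \cite{MM2} is nontrivial; rather than iterating a global tightening operator to a fixed point, it performs a careful one--directional induction in which one must verify that tightening at a position does not undo tightness at the previously treated positions. As written, your argument reduces the theorem to its hardest step and then grants that step.
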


\subsection{Subsurface projections}
First, we recall close relatives of the curve complex, which are the arc complex $A(S)$, and the arc and curve complex $AC(S)$. In this paper, we assume that arcs are simple and essential. Also, we assume isotopy of arcs is relative to the boundaries setwise unless we say pointwise.

Suppose $\x(S)\geq 0$; the vertices of $A(S)$ ($AC(S)$) are isotopy classes of arcs (arcs and curves) and the simplices of $A(S)$ ($AC(S)$) are collections of arcs (arcs and curves) that can be mutually realized to be disjoint in $S$.

Suppose $x,y\in AC(S)$. The intersection number, $i(x,y)$ is the minimal possible geometric intersections of $x$ and $y$ in their isotopy classes. We say $x$ and $y$ are in minimal position if they realize $i(x,y)$.

Now, we define subsurface projections. There are two types, which are non--annular projections and annular projections. We let $\P(C(S))$ and $\P(AC(S))$ be the set of finite subsets in each complex.

\underline{Non--annular projections:}
Suppose $Z\subseteq S$ such that $Z$ is not an essential annulus. Let $x \in AC(S)$. Assume $x$ and $\partial Z$ are in minimal position. We define the map, $i_{Z}:AC(S)\rightarrow \P(AC(Z))$ by taking isotopy classes of $\{x\cap Z\}$. 

Suppose $A\subseteq S$; we let R(A) denote a regular neighborhood of $A$ in $S$.  We define the map, $p_{Z}:AC(Z)\rightarrow \P(C(Z))$ as follows;
\begin{itemize}
\item if $x\in C(Z)$, then $p_{Z}(x)=x$.
\item if $x\in A(Z)$, take $z,z' \subseteq \partial Z$ such that $\partial(x)$ lie on; then $p_{Z}(x)=\partial R(x\cup z\cup z')$. We note $z$ could be same as $z'$. See Figure \ref{kashiwa}.

\begin{figure}[htbp]
 \begin{center}
\includegraphics[width=120mm]{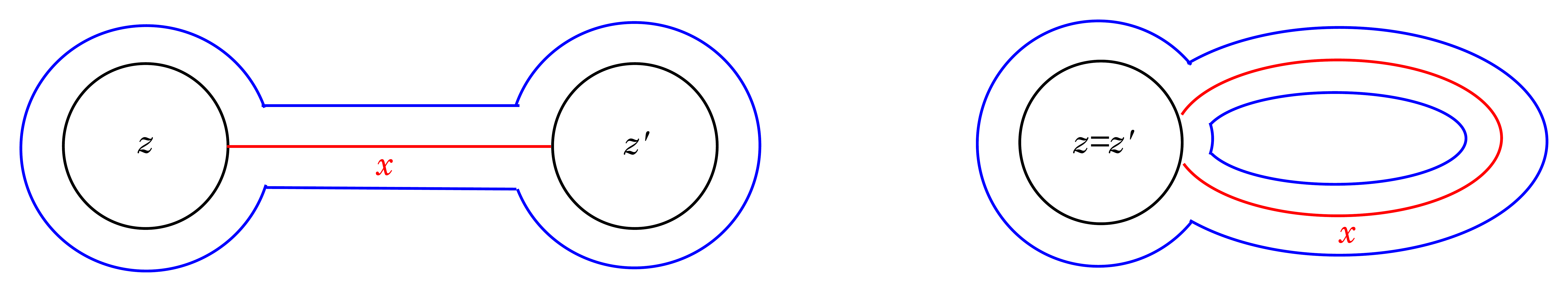}
\end{center}
\caption{$p_{Z}(x)=\partial R(x\cup z\cup z')$. }
 \label{kashiwa}
\end{figure}

\end{itemize}

The subsurface projection to $Z$ is the map, $$\pi_{Z}=p_{Z}\circ i_{Z}:AC(S)\rightarrow \P(C(Z)).$$

\underline{Annular projections:}
Suppose $Z\subseteq S$ such that $Z$ is an essential annulus. Take the annular cover of $S$ which corresponds to $Z$, compactly the cover with $\partial \mathbb{H}^{2}$. We denote the resulting compact annular cover by $S^{Z}$. We first define the annular curve complex, $C(Z)$; the vertices are isotopy classes of arcs whose endpoints lie on two boundaries of $S^{Z}$, here the isotopy is relative to $\partial S^{Z}$ pointwise. The edge between two vertices are realized if they can be isotoped to be disjoint in the interior of $S^{Z}$, again the isotopy is relative to $\partial S^{Z}$ pointwise. By fixing an orientation of $S$ and an ordering the components of $\partial S^{Z}$, algebraic intersection number of $x$ and $y$, $x\cdot y$ is well defined; we observe that $d_{Z}(x,y)=|x\cdot y|+1$. For a detailed treatment, see \cite{MM2}.

Let $x\in AC(S);$ the subsurface projection to $Z$ is the map, $$\pi_{Z}:AC(S)\rightarrow \P(C(Z))$$ such that $\pi_{Z}(x)$ is the set of all arcs connecting two boundaries of $S^{Z}$, which are obtained by the lift of $x$. We observe that $\pi_{Z}(x)=\emptyset$ if and only if $i(\p Z,x)=0$.

For both types of projections, if $A\subseteq AC(S)$ and $Z\subseteq S$, then we define $\d \pi_{Z}(A)=\bigcup_{a\in A} \pi_{Z}(a)$.

We observe the following by the definition of subsurface projections. Throughout in this paper, we often use it without referring.
\begin{lemma}\label{distance3}
Suppose $Z\subseteq S$ and $A,B\subseteq C(S)$. If $A$ and $B$ fill $S$, then $A$ or $B$ projects nontrivially to $Z$. In particular, if $d_{S}(A,B)>2$, then we always have $$ \pi_{Z}(A)\neq \emptyset \text{ or } \pi_{Z}(B)\neq \emptyset.$$
\end{lemma}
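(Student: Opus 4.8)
The plan is to prove the contrapositive of the first assertion: assuming $\pi_Z(A)=\emptyset$ and $\pi_Z(B)=\emptyset$ simultaneously, I will exhibit an essential, non--peripheral curve disjoint from $A\cup B$, so that $A$ and $B$ fail to fill $S$. Throughout I use the standing hypothesis that $\pi_Z$ is nontrivially defined, i.e. $Z$ is an essential annulus or a non--annular subsurface with $\x(Z)\geq 1$; otherwise $C(Z)=\emptyset$ and the conclusion would be vacuous or false (e.g. when $Z$ is a pair of pants).

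The first step is to translate the hypothesis into a disjointness statement curve by curve, reading directly off the two definitions of $\pi_Z$ in the previous subsection. In the non--annular case $\pi_Z(c)=p_Z(i_Z(c))$ with $i_Z(c)=\{c\cap Z\}$; since $\x(Z)\geq 1$, a curve meeting $Z$ essentially contributes an essential arc or curve of $Z$ whose image under $p_Z$ is a nonempty subset of $C(Z)$, so $\pi_Z(c)=\emptyset$ holds exactly when $c$ can be isotoped off $Z$. In the annular case the excerpt already records $\pi_Z(c)=\emptyset \iff i(\partial Z,c)=0$, i.e. $c$ misses the core of $Z$. Hence $\pi_Z(A)=\pi_Z(B)=\emptyset$ forces every curve of $A\cup B$ to be disjoint from $Z$ (respectively, from its core).

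Next I extract the contradiction. Putting $\partial Z$ in minimal position against the fixed configuration $A\cup B$, the vanishing of all these intersection numbers lets me realize $Z$ disjoint from every curve of $A\cup B$ at once, so the interior of $Z$ lies inside a single complementary component $R$ of $A\cup B$ in $S$. In the non--annular case $Z$ is essential with $\x(Z)\geq 1$, hence neither a disk nor a peripheral annulus, so $R$ cannot be a disk or a peripheral annulus either; in the annular case the core of $Z$ is an essential, non--peripheral curve contained in $R$. In either event $S-(A\cup B)$ contains a piece that is not a disk or peripheral annulus, contradicting the definition of $A$ and $B$ filling $S$. This establishes the first assertion.

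Finally, the ``in particular'' clause follows formally: if $d_S(A,B)>2$ then, by the definition $d_S(A,B)=\max\{d_S(a,b)\}$, there are $a\in A$ and $b\in B$ with $d_S(a,b)>2$, so the preceding lemma shows the singleton multicurves $\{a\}$ and $\{b\}$ fill $S$; a fortiori $A\supseteq\{a\}$ and $B\supseteq\{b\}$ fill $S$, and the first part applies. The step I expect to require the most care is the simultaneous realization used in the extraction --- promoting ``each $c$ can individually be isotoped off $Z$'' to ``all of $A\cup B$ are disjoint from one copy of $Z$''. I would justify this by recording that empty projection is equivalent to $i(c,\partial Z)=0$ together with $c$ not being isotopic into $Z$, and that one may fix $\partial Z$ taut against the whole union $A\cup B$ (for instance by realizing everything geodesically in a fixed hyperbolic metric on $S$), after which zero intersection number becomes genuine disjointness for all curves at the same time.
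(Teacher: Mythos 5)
The paper never proves this lemma: it is introduced with ``We observe the following by the definition of subsurface projections'' and used throughout without further justification, so there is no argument of the paper's to compare yours against; the question is only whether your blind proof stands on its own. Your overall route --- contrapositive, translating $\pi_Z(c)=\emptyset$ into ``$c$ can be isotoped off $Z$'' (non--annular case) or ``$i(c,\partial Z)=0$'' (annular case), then realizing everything geodesically to promote curve-by-curve disjointness to simultaneous disjointness --- is the natural fleshing-out, and your standing exclusion of pairs of pants (where $C(Z)=\emptyset$ makes the conclusion false) is a correct and necessary reading of the statement.

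There is, however, one genuine (though patchable) gap, in the annular case. You assert that ``the core of $Z$ is an essential, non--peripheral curve contained in $R$.'' Under your contrapositive hypothesis the core $z$ of $Z$ may itself belong to $A\cup B$, or be isotopic to a curve of $A\cup B$ --- note that $\pi_Z(z)=\emptyset$, so nothing rules this out --- and then the geodesic representative of the core \emph{is} one of the curves of $A\cup B$, not a curve lying in a complementary component. The conclusion survives, but the written step fails: the fix is to take a parallel push-off $z'$ of $z$, which satisfies $i(z',c)=i(z,c)=0$ for every $c\in A\cup B$ and hence can be realized inside the complement. Better still, you can avoid realization entirely by arguing in the direct (non--contrapositive) direction: if $A$ and $B$ fill $S$, every essential non--peripheral curve of $S$ meets some curve of $A\cup B$ essentially; apply this to the core of $Z$ (annular case) or to any curve of $C(Z)$ (non--annular case, using $\x(Z)\geq 1$), and observe that a curve of $A\cup B$ having positive intersection number with a curve inside $Z$ cannot be isotoped off $Z$, hence projects nontrivially. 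This direct version also sidesteps the delicacy of speaking of ``complementary components'' of $A\cup B$ when these sets are infinite, which your geodesic-realization phrasing quietly glosses over. A final small point: for the ``in particular'' clause you invoke the paper's filling lemma, which is stated only for $\x(S)>1$; when $\x(S)=1$ you should instead note the easy fact that any two distinct curves in $S_{1,1}$ or $S_{0,4}$ fill the surface.
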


\subsubsection{Some results on subsurface projections}
Suppose $A,B \subseteq AC(S)$; we define $d_{Z}(A,B)$ to be the diameter of $\pi_{Z}(A) \cup \pi_{Z}(B)$ in $C(Z).$ Recall

\begin{definition}[Dehn twist]
Suppose $Z$ is an essential annulus in $S$. Let the core curve of $Z$ be $x \in C(S)$. We topologically understand $Z$ by $\{x\}\times [0,1]=S^{1}\times [0,1]$; Dehn twist along $x$, $T_{x}$ is defined as follows:
$$T_{x}(a) =
\left\{
	\begin{array}{ll}
		a  & \mbox{if } a\notin Z \\
		 \big(e^{2i\pi(\theta+r)}, r \big)& \mbox{if } a=(e^{2i\pi(\theta)}, r)\in Z=S^{1}\times [0,1]
	\end{array}
\right.$$
\end{definition}

First, we observe

\begin{lemma} [\cite{MM2}]\label{min}
Suppose $Z$ is an essential annulus in $S$. Let $x\in C(S)$ be the core curve of $Z$ and let $T_{x}$ be Dehn twist of $x$. If $y\in C(S)$ is such that $\pi_{Z}(y)\neq \emptyset$, then $$d_{Z}(y,T_{x}^{n}(y))=|n|+2 \text{ for }n\neq0.$$

If $y$ intersects $x$ exactly twice with opposite orientation, then a half twist to $y$ is well defined to obtain a curve $H_{x}(y)$, which is taking $x\cup y$ and resolving the intersections in a way consistent with the orientation. Then $H_{x}^{2}(y)=T_{x}(y)$, and  $$\d d_{Z}(y,H_{x}^{n}(y))=\left\lfloor\dfrac{|n|}{2}\right\rfloor+2  \text{ for }n\neq0.$$
\end{lemma}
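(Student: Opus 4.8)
The plan is to reduce everything to the distance formula $d_{Z}(a,b)=|a\cdot b|+1$ recorded just above the statement, so that the whole computation becomes a bookkeeping of algebraic intersection numbers of crossing arcs in the annular cover $S^{Z}$. The first ingredient is equivariance of the projection under the twist: since $T_{x}$ is supported in a regular neighborhood of $x$ and preserves $Z$, it commutes with the deck group of $S^{Z}$ and hence lifts to a homeomorphism of $S^{Z}$, giving $\pi_{Z}(T_{x}^{n}(y))=T_{x}^{n}(\pi_{Z}(y))$. Because $\pi_{Z}(y)\neq\emptyset$ we may fix a representative crossing arc $a\in\pi_{Z}(y)$, and it then suffices to compute $|a\cdot T_{x}^{n}(a)|$ and show it equals $|n|+1$.

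To carry out the count I would pass to the universal cover of the annulus $S^{Z}$, namely the strip $\mathbb{R}\times[0,1]$ with deck group $\langle T\rangle\cong\mathbb{Z}$ acting by $(s,t)\mapsto(s+1,t)$, where $\langle T\rangle$ is generated by the core $x$. The minimal intersection number of two crossing arcs in $S^{Z}$ is computed by lifting one arc to a single strip--arc $\tilde a$ and the other to the whole $\langle T\rangle$--orbit of its lift, and then counting intersections of $\tilde a$ with that orbit. Normalizing $\tilde a$ to the vertical segment from $(0,0)$ to $(0,1)$, the arc $T_{x}^{n}(a)$ lifts (with the same bottom endpoint) to the segment from $(0,0)$ to $(n,1)$, and its translates run from $(k,0)$ to $(k+n,1)$. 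A direct count of where these segments meet the line $s=0$ gives $|n|-1$ interior crossings together with exactly two crossings at the shared endpoints (the translates through $(0,0)$ and through $(0,1)$), all of the same sign; hence $|a\cdot T_{x}^{n}(a)|=|n|+1$ and $d_{Z}(y,T_{x}^{n}(y))=|n|+2$.

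For the half--twist statement I would use $H_{x}^{2}=T_{x}$. For even $n=2m$ the first part applies verbatim to give $d_{Z}(y,H_{x}^{2m}(y))=d_{Z}(y,T_{x}^{m}(y))=|m|+2=\lfloor |n|/2\rfloor+2$. For odd $n=2m+1$ the same strip computation applies, now with $H_{x}^{n}(a)$ lifting to the segment from $(0,0)$ to $(m+\tfrac{1}{2},1)$: its translates meet $s=0$ in $|m|$ interior points plus a single shared endpoint at the bottom, while the top endpoint is shifted by a half--integer and so is never shared. This yields $|a\cdot H_{x}^{n}(a)|=|m|+1$ and $d_{Z}(y,H_{x}^{n}(y))=|m|+2=\lfloor |n|/2\rfloor+2$, matching both parities.

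The step I expect to be the main obstacle is the exact intersection count, specifically justifying the two endpoint contributions that produce the ``$+2$'' rather than a naive ``$+|n|$''. This forces one to be careful that in the annular complex the arcs are taken rel $\partial S^{Z}$ pointwise and that the pairing $a\cdot b$ detects the linking of the arcs' endpoints at infinity, so that the two shared endpoints of $a$ and $T_{x}^{n}(a)$ each contribute one unremovable crossing in minimal position (and exactly one is shared in the odd half--twist case). Verifying that these endpoint crossings are genuine and carry the same sign as the interior crossings is the crux; everything else is the linear bookkeeping described above, together with the remark that the chosen representatives realize the diameter of $\pi_{Z}(y)\cup\pi_{Z}(T_{x}^{n}(y))$.
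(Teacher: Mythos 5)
First, a point of comparison: the paper does not prove this lemma at all --- it is quoted from Masur--Minsky \cite{MM2} (their Lemma~2.4), and the proof there is essentially the strip computation you outline. So your overall strategy (equivariance of $\pi_{Z}$ under $T_{x}$, the formula $d_{Z}(a,b)=|a\cdot b|+1$, and counting crossings of one lift with all translates of the twisted lift) is the standard and correct one.

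However, there is a genuine gap, located exactly at the step you flag as the crux. Your normalization that $T_{x}^{n}(a)$ lifts ``with the same bottom endpoint'' to the segment from $(0,0)$ to $(n,1)$, combined with the claim that the two shared endpoints ``each contribute one unremovable crossing,'' is not a consequence of the definitions --- and under the conventions stated in this very paper it gives the wrong answer. The paper defines the vertices of the annular complex as arcs rel $\partial S^{Z}$ pointwise, with an edge when the \emph{interiors} can be made disjoint. If two arcs literally share both endpoints and differ by one full wrap (your picture with $n=1$), their interiors are already disjoint: the translates of the segment from $(0,0)$ to $(1,1)$ run from $(k,0)$ to $(k+1,1)$, and none of them meets the open line $\{s=0,\ 0<t<1\}$. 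Your model would thus yield $d_{Z}(y,T_{x}(y))=1$ rather than $3$; declaring that endpoint incidences count as crossings is an extra convention imposed to force the desired answer. The true content of the lemma is that the endpoints are \emph{not} shared: the lift of $T_{x}$ to the compactified cover $S^{Z}$ does not fix $\partial S^{Z}$ pointwise, but (up to corrections supported near the other lifts of the twisting annulus) fixes one boundary circle and acts with rotation number $\pm1$ on the other, so the endpoints of $\pi_{Z}(T_{x}^{n}(y))$ are offset from those of $\pi_{Z}(y)$ in a definite linking direction. It is this coherent offset that produces one genuine transverse crossing near each end; if the offsets went the other way, your count would drop to $|n|-1$. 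Since nothing in your write-up pins down this boundary behavior (equivalently, the endpoints at infinity of geodesic lifts of $T_{x}^{n}(y)$), the crucial ``$+2$'' is asserted, not proved, and the same issue infects the odd half-twist case. A secondary gap: $d_{Z}$ of two curves is defined as the \emph{diameter} of the union of their projections, which typically consist of several arcs, so you must also show no pair of components intersects more than $|n|+1$ times; you dismiss this in one clause (``the chosen representatives realize the diameter'') without argument.
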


Lastly, we observe the Bounded Geodesic Image Theorem; it was first proved by Masur--Minsky \cite{MM2}. A recent work of Webb shows that the bound depends only on the hyperbolicity constant, which implies that the bound is computable and uniform for all surfaces \cite{WEB1}. Here, we state Webb's version of the Bounded Geodesic Image Theorem.

\begin{theorem} [Bounded Geodesic Image Theorem]\label{BGIT}
There exists $M(\delta)$ such that the following holds. If $\{x_{i}\}_{0}^{n}$ is a multigeodesic such that $x_{i}$ projects nontrivially to $Z\subsetneq S$ for all $i$, then $$d_{Z} (x_{0},x_{n}) \leq M(\delta).$$
\end{theorem}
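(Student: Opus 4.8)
The plan is to prove the contrapositive: I will show that a multigeodesic all of whose vertices project nontrivially to $Z$ has image of uniformly bounded diameter in $C(Z)$. Since selecting one curve $v_i\in V_i$ from each multicurve moves the projection only a bounded amount, it suffices to treat an ordinary geodesic $x_0,x_1,\dots,x_n$ in $C(S)$ with $\pi_Z(x_i)\neq\emptyset$ for every $i$, and to bound $d_Z(x_0,x_n)$ by a constant depending on $\delta$ alone. The argument rests on two ingredients: a \emph{Lipschitz} estimate, which controls how fast the projection can move along the path, and a \emph{contraction} estimate coming from the hyperbolicity of $C(S)$, which is what actually produces a bound independent of $n$.

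First I would establish the Lipschitz property directly from the definition of $\pi_Z=p_Z\circ i_Z$: if $x$ and $y$ are disjoint curves that both cut $Z$, then representatives of $x\cap Z$ and $y\cap Z$ (respectively their lifts to $S^Z$ in the annular case) can be realized disjointly, so applying $p_Z$ yields curves of $C(Z)$ at distance at most a universal constant $C_0$; that is, $d_Z(x,y)\le C_0$ whenever $d_S(x,y)\le 1$. Consequently the projected sequence $\pi_Z(x_0),\dots,\pi_Z(x_n)$ is a $C_0$-coarse path in $C(Z)$, and in particular $d_Z(x_0,x_n)\le C_0\,n$. This estimate alone is useless since it grows with $n$; its role is only to guarantee that if the projection distance $D=d_Z(x_0,x_n)$ were large, then the $C(S)$-geodesic would necessarily be long, $n\ge D/C_0$, which is the regime where hyperbolicity can be brought to bear.

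The heart of the argument is the contraction estimate, and it is where I expect the real work to lie. Let $\Lambda_Z\subseteq C(S)$ be the set of curves that project trivially to $Z$, i.e. the curves disjoint from $Z$ (so $\pi_Z=\emptyset$ on $\Lambda_Z$); the nontrivial-projection hypothesis says exactly that the geodesic $x_0,\dots,x_n$ avoids $\Lambda_Z$. I would show that $\pi_Z$ coarsely agrees with a nearest-point projection of $C(S)$ onto $\Lambda_Z$, and that $\Lambda_Z$, which sits inside the link of $\partial Z$ in $C(S)$, is $Q$-quasiconvex with $Q=Q(\delta)$. Then the standard fact that in a $\delta$-hyperbolic space the nearest-point projection onto a quasiconvex set sends any geodesic lying outside a fixed neighborhood of that set to a set of diameter bounded only in terms of $\delta$ and $Q$ yields $d_Z(x_0,x_n)\le M(\delta)$. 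Combined with the Lipschitz reduction, this closes the contrapositive.

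The main obstacle is \emph{uniformity}: a naive execution of the contraction step produces constants depending on $\x(S)$ through the combinatorics of $Z$, whereas the theorem demands a bound in $\delta$ only. This is precisely the content of Webb's refinement, and I would secure it by verifying that each constant above — the Lipschitz gauge $C_0$, the quasiconvexity constant $Q$, and the contraction gauge — is extracted purely from the uniform hyperbolicity of $C(S)$ rather than from any count of curves inside $Z$. A secondary obstacle is the annular case, $Z$ an essential annulus, where $C(Z)$ is built from the compactified cover $S^Z$ and $d_Z(x,y)=|x\cdot y|+1$; there the Lipschitz and contraction estimates must be re-derived in $S^Z$, and I would lean on the twisting computations recorded in Lemma \ref{min} to control how the lifts of disjoint curves can differ.
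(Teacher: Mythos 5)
You should first note the benchmark here: the paper does not prove Theorem \ref{BGIT} at all; it imports it, citing Masur--Minsky \cite{MM2} for the original statement and Webb \cite{WEB1} for the computable, uniform constant $M(\delta)$. So your proposal is an attempt to reprove a quoted black-box theorem, and it must be judged against those proofs. Your preliminary reductions are fine: passing from a multigeodesic to a geodesic costs a bounded amount, and the Lipschitz estimate (disjoint curves that both cut $Z$ have $\pi_Z$-images a uniformly bounded distance apart) is correct and standard. The gap is in what you call the heart of the argument, and it is fatal as stated.

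Let $\Lambda_Z$ be, as in your plan, the set of curves with $\pi_Z=\emptyset$. Every such curve is disjoint from $\partial Z$, so $\Lambda_Z$ lies in the radius-$1$ ball about $\partial Z$ in $C(S)$; it has diameter at most $2$ and is trivially quasiconvex, but this buys nothing, for two reasons. First, the hypothesis of Theorem \ref{BGIT} only forbids the \emph{vertices} of the geodesic from lying in $\Lambda_Z$; the geodesic may pass at distance exactly $1$ from $\Lambda_Z$. The standard contraction lemma for nearest-point projection onto a quasiconvex subset of a $\delta$-hyperbolic space applies only to geodesics staying outside a neighborhood of radius comparable to $\delta$ plus the quasiconvexity constant, so it never engages; the statement you would actually need --- ``a geodesic merely avoiding $\Lambda_Z$ itself has bounded projection'' --- is precisely Theorem \ref{BGIT}, so invoking it would be circular. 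Second, and more fundamentally, $\pi_Z$ does \emph{not} coarsely agree with nearest-point projection onto $\Lambda_Z$, and no argument built purely from the hyperbolicity of $C(S)$ can: take $Z$ annular with core $x$, and $y$ with $d_S(x,y)=2$, so there is a curve $c$ disjoint from both $x$ and $y$, i.e.\ $c\in\Lambda_Z$ with $d_S(y,c)=1$. Since $T_x$ is supported near $x$, we have $T_x^n(c)=c$, hence $d_S(T_x^n(y),c)=1$ as well: the curves $y$ and $T_x^n(y)$ stay within $C(S)$-distance $4$ of each other and have coarsely identical nearest points in $\Lambda_Z$ (a diameter-$2$ set), yet $d_Z(y,T_x^n(y))=|n|+2$ by Lemma \ref{min}, which is unbounded in $n$. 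So the annular distance (twisting) is invisible to the coarse geometry of $C(S)$ near $\Lambda_Z$, which is exactly why the actual proofs inject genuine surface combinatorics: Masur--Minsky argue with the topology of curves on $S$, and Webb's uniform version uses the unicorn-path/tightness machinery of Hensel--Przytycki--Webb rather than a generic projection-contraction lemma. Your stated worry about uniformity of constants is therefore downstream of the real problem: as written, the contraction step yields no bound at all, uniform or otherwise.
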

   
In the rest of this paper, we mean $M$ as $M$ in the statement of the Bounded Geodesic Image Theorem.

\section{\textbf{Outline}}
The goal of this section is to capture the main contents of this paper with a more detailed discussion than $\S 1$. 

First, we rephrase Theorem \ref{br} by Theorem \ref{A} by using the following definition.
\begin{definition}\label{defintion}
Suppose $\x(S)\geq 1$ and $A\subseteq C(S)$. Let $l>0$, $k>1$ and $Z\subseteq S$. 
\begin{itemize}
\item We say $A$ satisfies the property $\P(l,k,Z)$ if $A$ does not contain $k$ curves whose projections to $Z$ are mutually more than $l$ apart in $C(Z)$. 

\item We say $A$ satisfies the property $\P(l,k)$ if $A$ satisfies the property $\P(l,k,Z)$ for all $Z\subseteq S$.
\end{itemize}
\end{definition}
\begin{remark}
For some readers, the following equivalent definition could be more convenient; 
we say $A$ satisfies the property $\P(l,k,Z)$ if there is no $A'\subseteq A$ such that $|A'|= k$ and $d_{Z}(x,y)>l$ for all $x,y\in A'$. 
\end{remark}

First, we prove the following main theorem, $\U$ of the curve graph via subsurface projections.
\begin{thmx}[Contrapositive version of Theorem \ref{br}]\label{A}
\it{Suppose $\x(S)\geq 1$. There exists a computable $N_{X}(l,k)$ for any $l>0$ and $k>1$ such that the following holds. If $A\subseteq C(S)$ such that $A$ satisfies $\P(l,k)$, then $|A|\leq N_{S}(l,k)$.}
\end{thmx}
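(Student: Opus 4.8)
The plan is to argue by induction on the complexity $\x(S)$, reducing the statement for $S$ to the same statement on proper subsurfaces together with an analysis of twisting around a single curve. Throughout, every bound I produce will be an explicit composition of the functions $N_Y(\cdot,\cdot)$ for $\x(Y)<\x(S)$, the Bounded Geodesic Image constant $M$ of Theorem \ref{BGIT}, and the elementary counting of Example \ref{firstsection}, so computability is automatic.

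First I would use the property for the whole surface. Taking $Z=S$, the projection $\pi_{S}$ is the identity and $d_{S}$ is the curve--graph metric, so $\P(l,k,S)$ says exactly that $A$ contains no $k$ curves that are pairwise more than $l$ apart in $C(S)$. Running the greedy, induction--on--$k$ covering argument of Example \ref{firstsection}(4) (which only uses pairwise distances, not any valency bound) produces at most $k-1$ centres $x_{1},\dots,x_{k-1}\in A$ with $A\subseteq\bigcup_{i} N_{l}(x_{i})$. Hence it suffices to bound $|A\cap N_{l}(x)|$ by a computable function of $l,k,S$ uniformly in $x$, after which $|A|\le (k-1)\cdot(\text{that bound})$.

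To bound a single ball $B=A\cap N_{l}(x)$, I would split $B$ according to whether a curve is disjoint from $x$. The curves of $B$ disjoint from $x$ (distance $1$) lie in the components $Y$ of $S-x$, each of complexity strictly less than $\x(S)$; since subsurface projection to any $Z\subseteq Y$ is intrinsic to $Y$, the set $B\cap C(Y)$ still satisfies $\P(l,k)$ inside $C(Y)$, so by the inductive hypothesis $|B\cap C(Y)|\le N_{Y}(l,k)$. Summing over the finitely many topological types of complementary components controls this part uniformly in $x$. (When $\x(S)=1$ this part is empty, since distinct curves on $S_{1,1}$ or $S_{0,4}$ always intersect, and the base case reduces entirely to the next step.)

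The crux is the remaining curves $y\in B$ that intersect $x$, for which I would bound the intersection number $i(x,y)$ by a computable function of $l,k,S$ and then invoke the standard (computable) finiteness of curves meeting $x$ at most a fixed number of times. To bound $i(x,y)$ one must turn the clustered control given by $\P$ --- which for each subsurface $Z$ only says the projections $\pi_{Z}(A)$ split into at most $k-1$ bounded clusters --- into genuine control of all projections of the pair $(x,y)$ at once. The mechanism I would use is Theorem \ref{BGIT}: any $Z$ with $d_{Z}(x,y)>M$ has $\p Z$ coarsely on an $x$--$y$ geodesic, hence within the bounded region $N_{2l}(x)$, which limits the subsurfaces that can contribute; Lemma \ref{min} then converts the annular projection distances and the half/whole twists around $x$ into intersection--number contributions, while the $\P(l,k,Z)$ clustering bounds how large each relevant $d_{Z}(x,y)$ can be. Assembling these into a single computable intersection bound is the main obstacle, and it is exactly here that the hypothesis for all $Z$ (not just $Z=S$) is essential; the case $\x(S)=1$, where the only proper subsurfaces are annuli, is the model situation and should be verified first to calibrate the constants.
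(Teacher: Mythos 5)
Your outer scaffolding agrees with the paper's: induction on $\x(S)$, the greedy covering of $A$ by $k-1$ balls $N_{l}(x_{i})$ extracted from $\P(l,k,S)$, and the complexity induction applied to the curves of $A\cap N_{l}(x)$ that are disjoint from $x$. But the crux step --- bounding the curves in $N_{l}(x)$ that intersect $x$ by first bounding $i(x,y)$ and then invoking finiteness of curves with bounded intersection with $x$ --- fails for two independent reasons. First, $\P(l,k)$ with $k\geq 3$ gives no bound whatsoever on pairwise quantities such as $i(x,y)$ or $d_{Z}(x,y)$: any two-element set $A=\{x,y\}$ satisfies $\P(l,k)$ vacuously for every $k\geq 3$, and on $S_{1,1}$ one may take $x=\frac{1}{0}$ and $y=\frac{1}{q}$, so that $d_{S}(x,y)=2$ while $i(x,y)=q$ is arbitrarily large. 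The clustering hypothesis bounds the \emph{number} of clusters of $\pi_{Z}(A)$, not the distances between clusters, so your sentence ``the $\P(l,k,Z)$ clustering bounds how large each relevant $d_{Z}(x,y)$ can be'' is exactly where the argument breaks; note that the application in Theorem \ref{B} needs precisely $k=3$. Second, even granting a bound $i(x,y)\leq K$, there is no finiteness statement to invoke: the set of curves meeting a \emph{single} fixed curve $x$ at most $K$ times is infinite whenever $\x(S)\geq 1$ (on $S_{1,1}$ every integer slope $\frac{n}{1}$ meets $\frac{1}{0}$ exactly once; for $\x(S)\geq 2$ one can also twist $y$ around curves disjoint from $x$ without changing $i(x,y)$). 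The standard finiteness fact requires bounded intersection with a \emph{filling} collection, which a single curve never is.

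What the paper does instead, and what your plan is missing, is to propagate a cardinality bound rather than a distance or intersection bound: it inducts on the radius $i$ of the spheres $C_{i}(x)$ for $i\leq l$. Lemma \ref{co} takes $B=A\cap C_{i}(x)$, pushes each $b\in B$ down along a tight geodesic $g_{x,b}^{t}$ to the vertex $b'=g_{x,b}^{t}\cap C_{1}(x)$, and uses tightness together with Theorem \ref{BGIT} to get $d_{Z}(b',b)\leq M$ for every $Z\subseteq S-x$ (respectively $Z=R(x)$ when $\x(S)=1$); hence the collapsed set $B'\subseteq C_{1}(x)$ still satisfies the clustering property with the constant degraded only to $l+2M$, and $B\subseteq\bigcup_{y\in B'}C_{i-1}(y)$. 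The genuine counting then happens only on $C_{1}(x)$, where it is legitimate: for $\x(S)>1$ via the complexity induction (curves disjoint from $x$ live in lower-complexity subsurfaces), and for $\x(S)=1$ via Lemma \ref{min}, which identifies $C_{1}(x)$ with the twist orbit $\{T_{x}^{n}(y)\}_{n\in\mathbb{Z}}$ and converts $\P(l+2M,k,R(x))$ into the integer-counting bound of Example \ref{firstsection}. This distance-descent via tight geodesics is the mechanism your proposal needs in place of the intersection-number step.
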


We prove Theorem \ref{A} by a double induction of the complexity and the distance. See $\S 5$.

By using Theorem \ref{A} with $k=3$, we show 

\begin{thmx}[Theorem \ref{geo}]\label{B}
\it{Suppose $\x(S)\geq1$. Let $a,b\in C(S)$ and $r\geq 0$.}
\begin{enumerate}
\item \it{If $c\in g_{a,b}$, then $ |G(a,b)\cap N_{\delta}(c)| \leq N_{S}(2M,3).$}

\item \it{Suppose $d_{S}(a, b)\geq2r+2j+1$, where $j =3\delta+2$. If $c\in g_{a,b}$ and $c \notin N_{r+j}(a)\cup N_{r+j}(b)$, then $|G(a,b;r)\cap N_{2\delta}(c)|\leq N_{S}(4M,3).$}
\end{enumerate}
\end{thmx}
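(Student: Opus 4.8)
The plan is to deduce both statements from Theorem \ref{A} with $k=3$, by showing that the slice in question satisfies $\P(2M,3)$ (resp.\ $\P(4M,3)$). For the first statement I set $A = G(a,b)\cap N_{\delta}(c)$ and argue that $A$ contains no three curves whose projections to any $Z\subseteq S$ are pairwise more than $2M$ apart; Theorem \ref{A} then gives $|A|\le N_{S}(2M,3)$ at once. Everything rests on one lemma, which I will call the \emph{endpoint lemma}: if $x$ is a vertex of a tight geodesic from $a$ to $b$ and $\pi_{Z}(x)\neq\emptyset$, then $d_{Z}(a,x)\le M$ or $d_{Z}(x,b)\le M$.

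To prove the endpoint lemma I would pass to the tight multigeodesic $V_{0},\dots,V_{n}$ underlying the geodesic (Definition \ref{e}) and call an index $i$ \emph{active} if $\pi_{Z}(V_{i})\neq\emptyset$. Two inputs drive the argument. First, \emph{tightness forbids isolated active indices}: if $V_{i}$ is active but $V_{i-1},V_{i+1}$ are both disjoint from $\partial Z$, then $\partial Z$ is disjoint from $V_{i-1}\cup V_{i+1}$, and since this multicurve fills $S(V_{i-1},V_{i+1})$ while $V_{i}=\partial S(V_{i-1},V_{i+1})$, one can isotope $\partial Z$ off $V_{i}$, contradicting $\pi_{Z}(V_{i})\neq\emptyset$. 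Second, \emph{inactive indices are localized}: if $V_{i},V_{j}$ are both disjoint from $\partial Z$ then their representatives lie within distance $1$ of $\partial Z$, so $|i-j|=d_{S}(v_{i},v_{j})\le 2$. Combining these, the inactive indices cannot straddle the active index $m$ of $x$, since a straddle would squeeze $m$ into an isolated active slot between two inactive neighbors. Hence $V_{0},\dots,V_{m}$ or $V_{m},\dots,V_{n}$ is entirely active, and Theorem \ref{BGIT} yields $d_{Z}(V_{0},V_{m})\le M$ or $d_{Z}(V_{m},V_{n})\le M$. Since $a\in V_{0}$, $b\in V_{n}$, and $\pi_{Z}(x)\subseteq\pi_{Z}(V_{m})$, the diameter bound passes to $x$ with no loss, giving $d_{Z}(a,x)\le M$ or $d_{Z}(x,b)\le M$.

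Granting the endpoint lemma, the first statement is a two-colour pigeonhole. If $x_{1},x_{2},x_{3}\in A$ had pairwise $d_{Z}>2M$ for some $Z$, then since $2M$ exceeds the diameter of a single projection all three project nontrivially, so the endpoint lemma colours each $x_{i}$ by whether $d_{Z}(a,x_{i})\le M$ or $d_{Z}(x_{i},b)\le M$. Two of the three share a colour, say $d_{Z}(a,x_{1}),d_{Z}(a,x_{2})\le M$, whence $d_{Z}(x_{1},x_{2})\le 2M$, a contradiction. The degenerate cases are painless: when $\pi_{Z}(a)=\emptyset$ the same argument always assigns the opposite colour; when both $\pi_{Z}(a),\pi_{Z}(b)=\emptyset$, Lemma \ref{distance3} forces $d_{S}(a,b)\le 2$ and the first input shows no interior vertex projects; and when $\x(S)=1$ the only curve disjoint from $\partial Z$ is its core, so inactivity occurs at a single index and the lemma is even easier.

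For the second statement I run the same scheme with $A=G(a,b;r)\cap N_{2\delta}(c)$, applying the endpoint lemma to the genuine endpoints $a_{i}\in N_{r}(a)$, $b_{i}\in N_{r}(b)$ of the tight geodesic carrying each $x_{i}$. The colouring records $d_{Z}(a_{i},x_{i})\le M$ or $d_{Z}(x_{i},b_{i})\le M$; a shared colour, say for $x_{1},x_{2}$ on the $a$-side, gives $d_{Z}(x_{1},x_{2})\le d_{Z}(x_{1},a_{1})+d_{Z}(a_{1},a)+d_{Z}(a,a_{2})+d_{Z}(a_{2},x_{2})$, so the bound $4M$ follows once $d_{Z}(a,a_{i})\le M$ for each $a$-side index. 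This is exactly where the hypotheses $d_{S}(a,b)\ge 2r+2j+1$ and $c\notin N_{r+j}(a)\cup N_{r+j}(b)$ with $j=3\delta+2$ enter: being $a$-coloured places the inactive window beyond $x_{i}$ on the $b_{i}$-side, so $\partial Z$ sits far past $x_{i}$, which is itself far from $a$; a thin-triangle estimate should then push $\partial Z$ off $N_{r}(a)$, so every vertex of $[a,a_{i}]$ meets $\partial Z$ and Theorem \ref{BGIT} gives $d_{Z}(a,a_{i})\le M$. I expect the main obstacle of the whole proof to be precisely this calibration: verifying that $j=3\delta+2$ is large enough to force $d_{S}(a,\partial Z)>r+1$ (and symmetrically for $b$) demands a hyperbolicity estimate relating $d_{S}(a,\partial Z)$ to $d_{S}(a,x_{i})$ that loses only $O(\delta)$ and \emph{not} a factor of $r$, and it is the endpoint lemma together with the localization of inactive indices that keeps that estimate additive in $\delta$.
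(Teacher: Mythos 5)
Your treatment of the first statement is correct and is essentially the paper's own route: your ``endpoint lemma'' is Lemma \ref{ti} (where the paper cites Lemma 4.10 of \cite{MM2} for contiguity of the footprint, you reprove that contiguity directly from tightness, which is fine), and your two-colour pigeonhole is exactly Lemma \ref{cno3}, after which Theorem \ref{A} with $k=3$ finishes. The only omission there is the case $Z=S$, where the endpoint lemma does not apply but the bound is trivial: the slice lies in $N_{\delta}(c)$, so its diameter in $C(S)$ is at most $2\delta<2M$.

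The second statement, however, has a genuine gap. Your key step is the implication ``being $a$-coloured places the inactive window beyond $x_{i}$ on the $b_{i}$-side,'' and this is false: the colour records which of the two BGIT inequalities $d_{Z}(a_{i},x_{i})\leq M$, $d_{Z}(x_{i},b_{i})\leq M$ holds, and that does not locate $\partial Z$. In particular, if $\partial Z$ crosses \emph{every} vertex of $g_{a_{i},b_{i}}^{t}$, both inequalities hold, there is no inactive window at all, and $\partial Z$ may sit deep inside $N_{r}(a)$. This case is not vacuous: take a curve $z$ with $d_{S}(a,z)=2$, let $Z=R(z)$, and let $a_{i}=T_{z}^{n}(a)$; then $a_{i}\in N_{r}(a)$ for $r\geq 4$, while $d_{Z}(a,a_{i})=n+2$ by Lemma \ref{min} can exceed any bound, and nothing in your argument prevents a tight geodesic from such an $a_{i}$ through $N_{2\delta}(c)$ from crossing $z$ at every vertex. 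For such an $x_{i}$ your scheme may assign colour $a$, the claim $d_{Z}(a,a_{i})\leq M$ fails, and the routing of $d_{Z}(x_{1},x_{2})$ through $\pi_{Z}(a)$ collapses. The paper closes exactly this hole as Case 1 of Corollary \ref{c}: when both tight-geodesic inequalities hold, it uses $d_{S}(a,b)\geq 2r+2j+1$ to see that $N_{r}(a)$ and $N_{r}(b)$ are more than $2$ apart, so by Lemma \ref{distance3} either every vertex of $g_{a,a_{i}}$ or every vertex of $g_{b_{i},b}$ projects nontrivially to $Z$, whence Theorem \ref{BGIT} gives $d_{Z}(a,a_{i})\leq M$ \emph{or} $d_{Z}(b_{i},b)\leq M$ --- and the colour of $x_{i}$ must be chosen according to which of \emph{these} holds, not according to the inequalities along $g_{a_{i},b_{i}}^{t}$. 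Your thin-triangle argument (an inactive vertex beyond $x_{i}$ lies within $d_{S}(c,b)+3\delta$ of $b$, hence outside $N_{r+2}(a)$, hence $g_{a,a_{i}}$ is entirely active and BGIT applies) is correct and is precisely the paper's Case 2, including the role of $j=3\delta+2$; but it only covers the situation where an inactive window genuinely exists on the far side of $x_{i}$, so without the Case 1 argument your proof of the $4M$ bound does not go through.
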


For the second statement of Theorem \ref{B}, we have to argue a bit, yet the proof mostly consists of technical arguments which are commonly used in $\delta$--hyperbolic spaces. See Corollary \ref{c}. Once we have Corollary \ref{c}, we can easily observe Lemma \ref{cno3}. Combining Theorem \ref{A} with Lemma \ref{cno3}, we obtain Theorem \ref{B}.

We note that a key machinery for the proof of Corollary \ref{c} is Lemma \ref{ti}, which observes an interesting behavior of tight geodesics with the Bounded Geodesic Image Theorem. 
Indeed, Lemma \ref{ti} was the motivation for the author to define weak tight geodesics. See $\S6$, where we also prove Theorem \ref{B} in the setting of weak tight geodesics; for a specific statement, see Theorem \ref{weaktight}.

\section{\textbf{Theorem \ref{A} with $k=3$ implies Theorem \ref{B}}}

We observe the following important property of tight geodesics; this is a key observation which we use to prove Corollary \ref{c}. Suppose $x,y\in C(S)$; we let $g_{x,y}^{t}$ be a tight geodesic between $x$ and $y$. 

\begin{lemma}\label{ti}
Suppose $\x(S)\geq 1$ and $Z\subsetneq S$. Let $x,y\in C(S)$ and $v\in g_{x,y}^{t}$. If $\pi_{Z}(v)\neq \emptyset$, then $$d_{Z}(x,v)\leq M \text{ or } d_{Z}(v,y)\leq M.$$
\end{lemma}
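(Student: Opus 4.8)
The plan is to combine two observations about the vertices of the tight geodesic that are \emph{disjoint} from $\partial Z$ with the Bounded Geodesic Image Theorem (Theorem~\ref{BGIT}). Write the tight geodesic as $g_{x,y}^{t}=v_{0},v_{1},\dots,v_{n}$ with $x=v_{0}$, $y=v_{n}$, and $v=v_{m}$, and let $V_{0},\dots,V_{n}$ be an underlying tight multigeodesic with $v_{i}\in V_{i}$. Set $I=\{i:\pi_{Z}(v_{i})=\emptyset\}$; recall that $\pi_{Z}(v_{i})=\emptyset$ exactly when $v_{i}$ is disjoint from $\partial Z$. Since $\pi_{Z}(v)\neq\emptyset$ we have $m\notin I$, and the whole point will be to show that the indices in $I$ lie \emph{entirely on one side} of $m$. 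Once this is known, either $v_{0},\dots,v_{m}$ all project nontrivially to $Z$ or $v_{m},\dots,v_{n}$ all do; feeding the corresponding sub-multigeodesic into Theorem~\ref{BGIT} yields $d_{Z}(x,v)\leq M$ or $d_{Z}(v,y)\leq M$ respectively (using that $\pi_{Z}(v_{i})\subseteq\pi_{Z}(V_{i})$, so the hypothesis of Theorem~\ref{BGIT} holds for the multicurves $V_{i}$).

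First I would bound how spread out $I$ can be. Fix a component $\gamma$ of $\partial Z$ (the core curve when $Z$ is an annulus). If $i\in I$ then $v_{i}$ is disjoint from $\gamma$, so $d_{S}(v_{i},\gamma)\leq 1$; hence for any $i,j\in I$ we get $d_{S}(v_{i},v_{j})\leq 2$, and because $v_{0},\dots,v_{n}$ is a geodesic this forces $|i-j|\leq 2$. Thus $I$ spans at most three consecutive indices. Second, I would rule out a gap inside this window using tightness: if $\pi_{Z}(v_{i-1})=\pi_{Z}(v_{i+1})=\emptyset$, then $V_{i-1}\cup V_{i+1}$ is disjoint from $\partial Z$, so the subsurface $S(V_{i-1},V_{i+1})$ that it fills, and in particular its boundary $V_{i}=\partial S(V_{i-1},V_{i+1})$, is disjoint from $\partial Z$; therefore $\pi_{Z}(v_{i})=\emptyset$. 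Combining the two observations, $I$ is a block of consecutive integers, and since $m\notin I$ it lies strictly to the left or strictly to the right of $m$, which is exactly the dichotomy needed above.

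The main obstacle is the second step: without tightness one cannot prevent $v$ from being sandwiched between two geodesic vertices that miss $\partial Z$, and in that configuration neither half of the geodesic projects nontrivially throughout, so Theorem~\ref{BGIT} gives nothing. The tight relation $V_{i}=\partial S(V_{i-1},V_{i+1})$ is precisely what closes this gap, and it is the only place tightness enters. I would also check the degenerate cases (when $I=\emptyset$, when $m=0$ or $m=n$, and when $Z$ is annular so that $\partial Z$ is a single curve) separately, but these are routine: when $I=\emptyset$ the whole geodesic projects nontrivially and Theorem~\ref{BGIT} applies directly, and the annular case runs verbatim with $\gamma$ the core curve.
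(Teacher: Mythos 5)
Your overall skeleton---show that the set of indices where the projection vanishes is a contiguous block of at most three consecutive indices, conclude that it lies entirely on one side of $m$, and apply Theorem~\ref{BGIT} to the other side---is exactly the paper's strategy (the paper simply cites Lemma 4.10 of \cite{MM2} for the contiguity instead of reproving it). But your second step has a genuine gap: you define $I=\{i:\pi_{Z}(v_{i})=\emptyset\}$ using the \emph{vertices} $v_{i}$ of the geodesic, and then, to rule out a gap at $i$, you argue that $\pi_{Z}(v_{i-1})=\pi_{Z}(v_{i+1})=\emptyset$ forces $V_{i-1}\cup V_{i+1}$ to be disjoint from $Z$. That inference is unjustified: $v_{i\pm 1}$ is only one component of the multicurve $V_{i\pm 1}$, and the other components may well cross $Z$. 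In that case $S(V_{i-1},V_{i+1})$---which by tightness is what determines $V_{i}$---can intersect $Z$, and its boundary component $v_{i}$ may too; the tight relation $V_{i}=\partial S(V_{i-1},V_{i+1})$ constrains $V_{i}$ through the \emph{full} multicurves, not through your chosen components. So the no-gap claim for the vertex-level footprint $I$ is not established (and it is not clear that it is even true), whereas your steps 1 and 3 are phrased so that they only see the vertex level.

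The repair is to run the entire argument at the multicurve level, i.e.\ with $\Phi=\{i:\pi_{Z}(V_{i})=\emptyset\}$. Your first step works verbatim: if $i,j\in\Phi$ then $V_{i}$ and $V_{j}$ are disjoint from a component $\gamma$ of $\partial Z$, so $d_{S}(V_{i},V_{j})\leq 2$ and the multigeodesic property gives $|i-j|\leq 2$. Your tightness argument for the no-gap step now becomes correct: $\pi_{Z}(V_{i\pm 1})=\emptyset$ means the entire multicurves can be isotoped off $Z$, so $S(V_{i-1},V_{i+1})$ avoids $Z$ (an essential subsurface $Z$ cannot lie inside a complementary disk or peripheral annulus that gets filled in), hence $V_{i}=\partial S(V_{i-1},V_{i+1})$ has empty projection. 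Since $\pi_{Z}(V_{m})\supseteq\pi_{Z}(v_{m})\neq\emptyset$, we have $m\notin\Phi$, so $\Phi$ lies strictly on one side of $m$, and Theorem~\ref{BGIT} applied to the sub-multigeodesic on the other side (all of whose terms project nontrivially) yields $d_{Z}(x,v)\leq M$ or $d_{Z}(v,y)\leq M$; this corrected argument is precisely Lemma 4.10 of \cite{MM2} combined with the paper's proof. A minor additional imprecision: for non-annular $Z$, the equivalence ``$\pi_{Z}(v)=\emptyset$ iff $v$ is disjoint from $\partial Z$'' is false---a curve contained in $Z$ is disjoint from $\partial Z$ but projects nontrivially---and the correct formulation (that $v$ can be isotoped off $Z$) is what both of your steps actually need.
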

\begin{proof} 
Let $g_{x,y}^{t}=\{v_{i}\}$ such that $d_{S}(x,v_{i})=i$ for all $i$. Assume $v=v_{k}.$ Take a tight multigeodesic $\{V_{i}\}$ such that $v_{i}\in V_{i}$ for all $i$. 

Suppose $\pi_{Z}(V_{i}) \neq \emptyset$ for all $i>k$. Then, by Theorem \ref{BGIT}, we obtain $ d_{Z}(v_{k},y)\leq M .$

Suppose not, then we use Lemma 4.10 of \cite{MM2}, which states that the vertices of a tight multigeodesic that miss to project to $Z$ (called the footprint in \cite{MM2}) is a subsequence of 0, 1, 2, or 3 contiguous vertices. Therefore, we must have $\pi_{Z}(V_{i}) \neq \emptyset$ for all $i<k$; again by Theorem \ref{BGIT}, we obtain $d_{Z}(x,v_{k})\leq M.$
 \end{proof}

The following is a corollary of Lemma \ref{ti} with some technical observations.

\begin{corollary}\label{c}
Suppose $\x(S)\geq1$ and $Z\subsetneq S$. Let $a,b\in C(S)$ and $r\geq 0$.
\begin{enumerate}
\item Let $c\in g_{a,b}$. If $x\in G(a,b)\cap N_{\delta}(c)$ and $\pi_{Z}(x) \neq \emptyset$, then $$d_{Z}(a,x)\leq M\text{ or }d_{Z}(x,b)\leq M.$$ 

\item Suppose $d_{S}(a, b)\geq2r+2j+1$, where $j =3\delta+2$. Let $c\in g_{a,b}$ such that $c \notin N_{r+j}(a)\cup N_{r+j}(b)$. If $x\in G(a,b;r)\cap N_{2\delta}(c)$ and $\pi_{Z}(x) \neq \emptyset$, then $$d_{Z}(a,x)\leq 2M\text{ or }d_{Z}(x,b)\leq 2M.$$ 
\end{enumerate}
\end{corollary}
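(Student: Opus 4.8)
Part (1) should fall out of Lemma \ref{ti} with nothing extra. Since $x\in G(a,b)$, by definition $x$ lies on some tight geodesic $g^{t}_{a,b}$, and we are given $\pi_{Z}(x)\neq\emptyset$ with $Z\subsetneq S$; so Lemma \ref{ti} immediately yields $d_{Z}(a,x)\leq M$ or $d_{Z}(x,b)\leq M$. In fact the hypotheses $c\in g_{a,b}$ and $x\in N_{\delta}(c)$ play no role in (1); they are recorded only to match the format of Theorem \ref{B}. So the whole content is in (2), where the plan is to reduce to (1) up to a controlled error. By definition of $G(a,b;r)$ there are $a'\in N_{r}(a)$ and $b'\in N_{r}(b)$ with $x\in g^{t}_{a',b'}$. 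Using $d_{S}(a,b)\geq 2r+2j+1$, $c\notin N_{r+j}(a)\cup N_{r+j}(b)$, and $d_{S}(c,x)\leq 2\delta$ with $j=3\delta+2$, I would first check that $d_{S}(a',x)$ and $d_{S}(x,b')$ both exceed $\delta$, so that $x$ sits in the interior of $g^{t}_{a',b'}$. Lemma \ref{ti} then gives, say, $d_{Z}(a',x)\leq M$, and everything comes down to replacing $a'$ by $a$ (resp.\ $b'$ by $b$) at the cost of at most one further $M$, i.e.\ to bounding $d_{Z}(a,a')$ and invoking the triangle inequality for the diameter $d_{Z}$.

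The transfer tool is the Bounded Geodesic Image Theorem. A geodesic from $a$ to $a'$ has length at most $r$, and if every one of its vertices projects nontrivially to $Z$, then Theorem \ref{BGIT} gives $d_{Z}(a,a')\leq M$. A vertex fails to project precisely when it is disjoint from $\partial Z$, and the set $B_{Z}$ of such curves has uniformly bounded diameter in $C(S)$ (any two are within distance $2$). Hence the transfer is legitimate exactly when the short geodesic $[a,a']$ (resp.\ $[b,b']$) avoids $B_{Z}$. The one thing that must be arranged is that the side on which Lemma \ref{ti} delivers its $M$-bound is the same side on which the transfer is valid.

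I would organize this as a case analysis on the location of $\partial Z$ relative to the central vertex $c$. If $\partial Z$ is close to $c$, then since $c$ is more than $r+j$ from each of $a$ and $b$, the set $B_{Z}$ is more than $r+1$ from both endpoints; consequently \emph{both} $[a,a']$ and $[b,b']$ avoid $B_{Z}$, both transfers are valid, and whichever side Lemma \ref{ti} produces we conclude $d_{Z}(a,x)\leq 2M$ or $d_{Z}(x,b)\leq 2M$. If instead $\partial Z$ is far from $c$, I would pass to a tight geodesic $g^{t}_{a,b}$ between the \emph{original} endpoints and pick a vertex $v$ on it within a bounded multiple of $\delta$ of $c$ (thinness of the bigon formed by $g_{a,b}$ and $g^{t}_{a,b}$). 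Since $\partial Z$ is far from $c$ we get $\pi_{Z}(v)\neq\emptyset$, so Lemma \ref{ti} on $g^{t}_{a,b}$ bounds $d_{Z}(v,\cdot)$ to whichever endpoint lies away from $\partial Z$, while the short geodesic $[v,x]$ avoids $B_{Z}$ and BGIT gives $d_{Z}(v,x)\leq M$; the triangle inequality again yields the $2M$ bound on the appropriate side.

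The hard part will be the bookkeeping of constants so that the two cases genuinely overlap with no gap: the threshold separating ``$\partial Z$ near $c$'' from ``$\partial Z$ far from $c$'' must simultaneously be small enough that $B_{Z}$ clears both endpoints by more than $r+1$ (using the margin $d_{S}(a,c),d_{S}(c,b)>r+j$) and large enough that the length-$O(\delta)$ geodesic $[v,x]$ stays outside $B_{Z}$. This is exactly where the specific value $j=3\delta+2$ and the radius $2\delta$ are consumed, and it will force me to keep the $\delta$-estimates tight (for instance by using that $x$ itself is close to a vertex of $g^{t}_{a,b}$ rather than routing only through $c$). The underlying reason the argument cannot be shortened is that subsurface projection is not coarsely Lipschitz, so the $r$-sized perturbation from $a',b'$ to $a,b$ cannot be absorbed directly and every comparison of projections must be funneled through BGIT applied to a geodesic that has been explicitly verified to miss $B_{Z}$.
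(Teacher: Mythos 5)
Your proof of (1) matches the paper's: it is immediate from Lemma \ref{ti}, and your observation that $c$ and the $\delta$--ball play no role in that part is accurate. For (2), however, your route is genuinely different from the paper's. The paper keeps the tight geodesic $g^{t}_{A,B}$ (your $g^{t}_{a',b'}$) throughout and splits according to which side(s) Lemma \ref{ti} bounds: when both $d_{Z}(A,x)\leq M$ and $d_{Z}(x,B)\leq M$ hold, it validates at least one of the transfer geodesics $g_{a,A}$, $g_{B,b}$ by noting that $N_{r}(a)$ and $N_{r}(b)$ are more than $2$ apart, so curves with empty $Z$--projection cannot appear on both; when only $d_{Z}(A,x)\leq M$ holds, it uses the contrapositive of Theorem \ref{BGIT} to produce $q\in g^{t}_{x,B}$ with $\pi_{Z}(q)=\emptyset$, and a thin--quadrilateral argument to show $d_{S}(q,b)\leq d_{S}(c,b)+3\delta$, hence $q\notin N_{r+2}(a)$, which validates the $a$--side transfer and gives $d_{Z}(a,x)\leq 2M$. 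You instead split on where $\partial Z$ sits relative to $c$, and in your far case you abandon the transfer strategy entirely, invoking a tight geodesic $g^{t}_{a,b}$ between the \emph{original} endpoints (it exists by Theorem \ref{MMMM}), Lemma \ref{ti} applied there, and BGIT on the short connector $g_{v,x}$ --- a move the paper never makes. Both arguments consume $j=3\delta+2$ and the radius $2\delta$ in comparable ways; yours trades the paper's quadrilateral argument locating $q$ for bigon thinness plus a sharp length estimate.

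The one point you flagged but did not verify --- that your two cases meet with no gap at $j=3\delta+2$ --- does close, but with zero slack, and only with the sharp estimate you hint at. Write $d_{S}(c,\partial Z)$ for the minimal distance from $c$ to a component of $\partial Z$. If some vertex $p$ of $g_{a,a'}$ or $g_{b,b'}$ has $\pi_{Z}(p)=\emptyset$, then $p$ is disjoint from every component $z$ of $\partial Z$, so $d_{S}(a,c)\leq d_{S}(a,p)+d_{S}(p,z)+d_{S}(z,c)\leq r+1+d_{S}(c,\partial Z)$; since $d_{S}(a,c)>r+j$, failure of a transfer forces $d_{S}(c,\partial Z)>j-1=3\delta+1$. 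So your near case covers exactly $d_{S}(c,\partial Z)\leq 3\delta+1$, and your far case must run under the hypothesis $d_{S}(c,\partial Z)>3\delta+1$. There, pick $v\in g^{t}_{a,b}$ with $d_{S}(c,v)\leq\delta$ (thinness of the bigon; $c$ cannot be $\delta$--close to an endpoint since $d_{S}(c,a),d_{S}(c,b)>r+j$), so $d_{S}(v,x)\leq 3\delta$, and for every vertex $w$ of $g_{v,x}$,
$$d_{S}(c,w)\leq\min\{\delta+d_{S}(v,w),\ 2\delta+d_{S}(w,x)\}\leq 3\delta,$$
because the two terms inside the minimum sum to at most $3\delta+d_{S}(v,x)\leq 6\delta$. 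Hence $\pi_{Z}(w)=\emptyset$ would give $d_{S}(c,\partial Z)\leq 3\delta+1$, a contradiction; now BGIT on $g_{v,x}$ and Lemma \ref{ti} at $v$ finish via the triangle inequality. Had you routed only through $v$ (or only through $c$), you would get merely $d_{S}(c,w)\leq 4\delta$, the far case would require $d_{S}(c,\partial Z)>4\delta+1$, and a gap of width $\delta$ would open between your cases --- fatal, since the curve--graph $\delta$ is not small. One last nit: in Lemma \ref{ti} you do not get to choose ``whichever endpoint lies away from $\partial Z$''; the lemma only delivers the disjunction, but the disjunction is all your argument needs.
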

\begin{proof}
The first statement follows from Lemma \ref{ti}.

We prove the second statement.
Let $A\in N_{r}(a)$ and $B\in N_{r}(b)$ such that $x$ is contained in $g_{A,B}^{t}$. Let $g_{A,x}^{t}$ be the subsegment of $g_{A,B}^{t}$ connecting $A$ and $x$, and let $g_{x,B}^{t}$ be the subsegment of $g_{A,B}^{t}$ connecting $x$ and $B$. See Figure \ref{Fig1}.

\begin{figure}[htbp]
 \begin{center}
  \includegraphics[width=100mm]{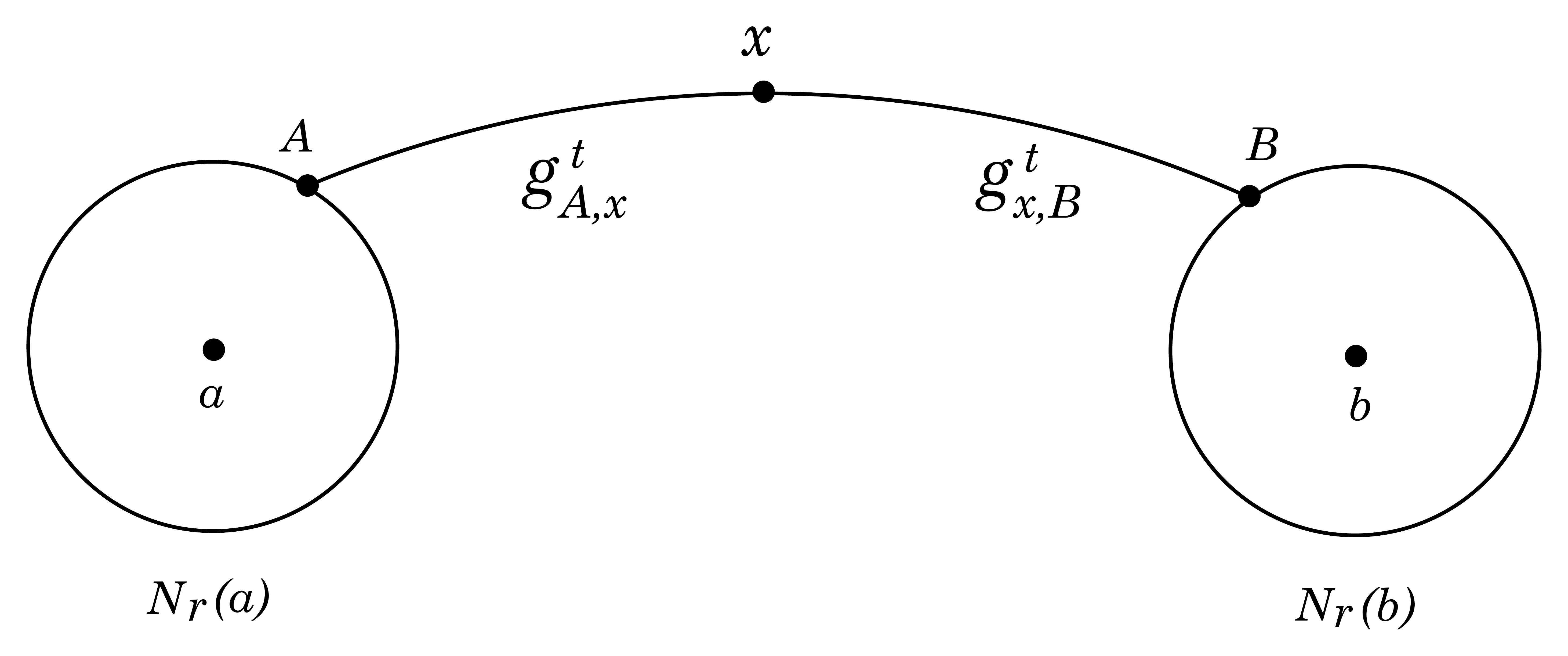}
 \end{center}
 \caption{$g_{A,B}^{t}= g_{A,x}^{t}\cup g_{x,B}^{t}$. }
 \label{Fig1}
\end{figure}

By Lemma \ref{ti}, we have $$d_{Z}(A,x)\leq M \text{ or } d_{Z}(x,B)\leq M.$$
Therefore, it suffices to consider the following two cases. 
\begin{itemize}
\item \underline{Case 1} is when we have $d_{Z}(A,x)\leq M$ and $d_{Z}(x,B)\leq M$. 
\item \underline{Case 2} is when we have either $d_{Z}(A,x)\leq M$ or $d_{Z}(x,B)\leq M$. 
\end{itemize}

\underline{Case 1}: We claim $$ d_{Z}(a,A)\leq M \text{ or } d_{Z}(B, b)\leq M.$$

 With this claim, we have $$\text{ }d_{Z}(a,x)\leq 2M \text{ or } d_{Z}(x, b)\leq 2M.$$

\emph{Proof of the claim.}
Since $d_{S}(a, b)\geq2r+2j+1$, we observe that $N_{r}(a)$ and $N_{r}(b)$ are far apart, i.e., $$\d \min_{a'\in N_{r}(a), b'\in N_{r}(b)} d_{S}(a',b')>2.$$ Recall that $g_{a,A}\subseteq N_{r}(a)$ and $g_{B,b}\subseteq N_{r}(b);$ by the above observation and Lemma \ref{distance3}, we observe that  every vertex of $g_{a,A}$ or $g_{B,b}$ projects nontrivially to $Z$. Therefore, by Theorem \ref{BGIT}, we have $$d_{Z}(a,A)\leq M \text{ or } d_{Z}(B, b)\leq M.$$

\underline{Case 2}: Without loss of generality, we assume that $$d_{Z}(A,x)\leq M \text{ and }d_{Z}(x,B)>M.$$Since $d_{Z}(x,B)>M$, there exists $q\in g_{x,B}^{t}$ such that $\pi_{Z}(q)= \emptyset$ by Theorem \ref{BGIT}.

We claim $$q\in N_{ d_{S}(c,b)+3\delta}(b).$$ 

First, we show that the claim implies the statement of this corollary. By the hypothesis on $c$, we have $N_{r+2}(a)\cap N_{ d_{S}(c,b)+3\delta}(b) = \emptyset.$
Therefore, by the claim, we can conclude $q\notin N_{r+2}(a).$ Now, since $g_{a,A}\subseteq N_{r}(a)$, we observe that $d_{S}(p,q)>2$ for all $ p\in g_{a,A}$, i.e., every vertex of $g_{a,A}$ and $q$ fill $S$. Therefore, every vertex of $g_{a,A}$ projects nontrivially to $Z$; by Theorem \ref{BGIT}, we have $$d_{Z}(a,A)\leq M. \Longrightarrow d_{Z}(a,x)\leq d_{Z}(a,A)+d_{Z}(A,x)\leq 2M.$$

\emph{Proof of the claim.}
Recall the claim; $$q\in N_{ d_{S}(c,b)+3\delta}(b). \Longleftrightarrow d_{S}(q, b)\leq d_{S}(c,b)+3\delta.$$ 
The proof will be done by a standard argument often used in $\delta$--hyperbolic spaces.
Consider the $4$--gon whose edges are $g_{x,B}^{t}, g_{B,b}, g_{b,c}$, and $g_{c,x}$. Take an additional geodesic $g_{c,B}$, which decomposes the $4$--gon into two triangles. See Figure \ref{Fig2}. 

\begin{figure}[htbp]
 \begin{center}
  \includegraphics[width=100mm]{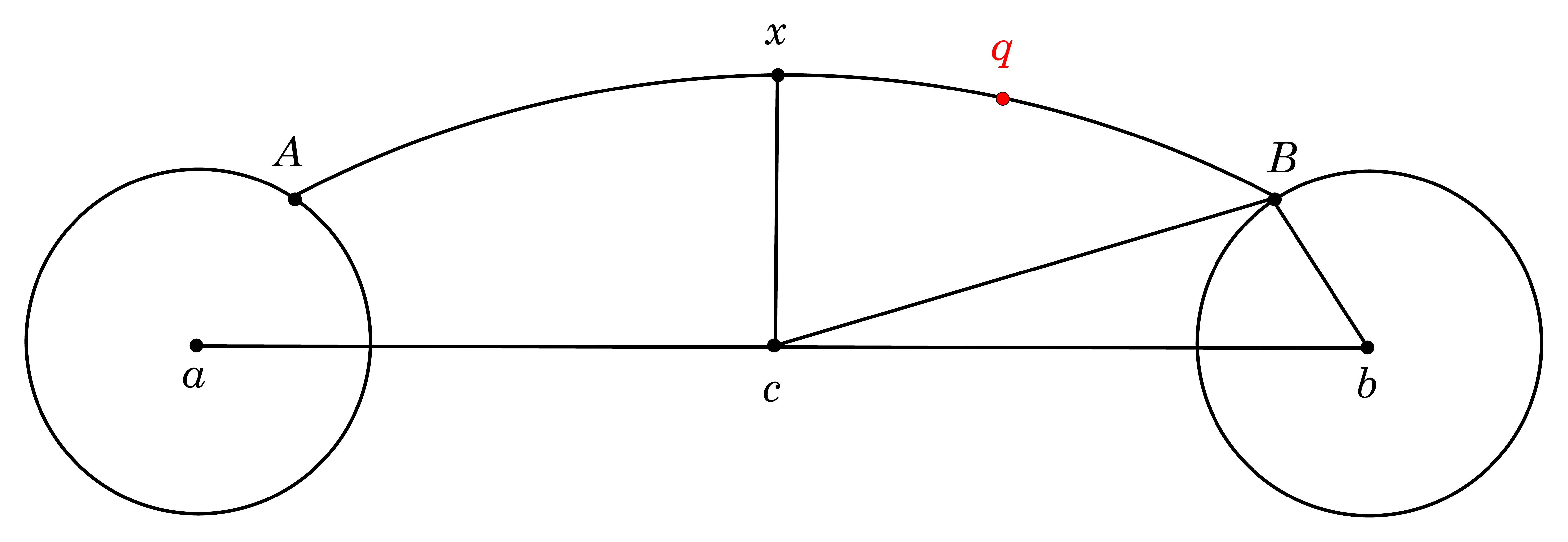}
 \end{center}
 \caption{The $4$--gon with $g_{c,B}$}
 \label{Fig2}
\end{figure}

By the hyperbolicity, there exists $\g \in g_{x,c}\cup g_{c,B}$ such that $d_{S}(q,\g)\leq \delta$.

If $\g \in g_{x,c} $, then 
\begin{align}
d_{S}(q, b)&\leq d_{S}(q,\g)+d_{S}(\g, c)+d_{S}(c, b) \tag{$\triangle$--inequality}
\\&\leq \delta +d_{S}(\g, c)+d_{S}(c, b) \tag{Since $d_{S}(q,\g)\leq \delta$}
\\&\leq 3 \delta +d_{S}(c, b). \tag{Since $d_{S}(\g, c)\leq d_{S}(x,c)\leq 2\delta$}
\end{align}

If $\g \in g_{c,B} $, then we use the hyperbolicity again on the triangle $\triangle_{c,B,b}$; there exists $\g' \in g_{c,b}\cup g_{b,B}$ such that $d_{S}(\g,\g')\leq \delta.$ Therefore, we have $$d_{S}(q,\g') \leq d_{S}(q,\g)+ d_{S}(\g,\g')\leq 2\delta.$$ If $\g' \in g_{c,b},$ then $$d_{S}(q, b)\leq d_{S}(q, \g')+d_{S}(\g', b)\leq 2\delta+ d_{S}(c, b)\leq 3 \delta +d_{S}(c, b).$$
If $\g' \in g_{b,B},$ then $$d_{S}(q, b)\leq d_{S}(q, \g')+d_{S}(\g', b) \leq 2\delta+ d_{S}(B,b)\leq  3 \delta +d_{S}(c, b).$$

\end{proof}

Now, we obtain the goal of this section; Theorem \ref{A} with $k=3$ implies Theorem \ref{B}.
First, we observe the following lemma which directly follows from Definition \ref{defintion}.

\begin{lemma}\label{bbb}
Suppose $\x(S)\geq1,$ $Z\subseteq S$, and $A\subseteq C(S)$. If $A$ satisfies $\P(l,k,Z)$, then $A$ satisfies $\P(l',k',Z)$ whenever $l'\geq l$ and $k' \geq k$.
\end{lemma}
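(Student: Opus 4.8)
The plan is to prove the contrapositive, since the two statements $\P(l,k,Z)\Rightarrow\P(l',k',Z)$ and $\neg\P(l',k',Z)\Rightarrow\neg\P(l,k,Z)$ are logically equivalent, and the negated forms are easier to manipulate directly. I will use the equivalent formulation of $\P$ given in the remark following Definition \ref{defintion}: $A$ fails to satisfy $\P(l,k,Z)$ exactly when there is some $A'\subseteq A$ with $|A'|=k$ and $d_{Z}(x,y)>l$ for all $x,y\in A'$, i.e. a ``spread-out'' witness set of the relevant size.

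So first I would assume that $A$ does \emph{not} satisfy $\P(l',k',Z)$ and extract the corresponding witness: a set $A'\subseteq A$ with $|A'|=k'$ whose members have pairwise $Z$-projection distance exceeding $l'$. From this witness I will build a witness against $\P(l,k,Z)$, which is where the two numerical hypotheses come in. Because $k'\geq k$, I may choose any subset $A''\subseteq A'$ with $|A''|=k$; and because $l'\geq l$, every pair $x,y\in A''\subseteq A'$ still satisfies $d_{Z}(x,y)>l'\geq l$. Hence $A''$ is a collection of $k$ curves in $A$ whose projections to $Z$ are mutually more than $l$ apart, which is precisely a witness that $A$ fails $\P(l,k,Z)$.

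This yields $\neg\P(l',k',Z)\Rightarrow\neg\P(l,k,Z)$, and taking the contrapositive gives the lemma. I do not expect any genuine obstacle: the result is a pure monotonicity property baked into Definition \ref{defintion}, and the only point requiring care is to match each relaxation to its parameter correctly, namely that enlarging $k$ to $k'$ lets us \emph{shrink} the witness set back down to size $k$, while enlarging $l$ to $l'$ lets us \emph{weaken} the pairwise-distance bound from $>l'$ to $>l$. Both relaxations move in the direction that preserves a witness, which is exactly what makes the argument go through in one step.
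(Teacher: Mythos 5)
Your argument is correct and is essentially the paper's own: the paper states this lemma as following directly from Definition \ref{defintion} and gives no further proof, and your contrapositive argument (shrink a size-$k'$ witness to size $k$, then weaken the pairwise bound from $>l'$ to $>l\,$) is precisely the one-line monotonicity check the paper leaves implicit.
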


By Corollary \ref{c}, we have 
\begin{lemma}\label{cno3}
Suppose $\x(S)\geq1$. Let $a,b\in C(S)$ and $r\geq 0$.
\begin{enumerate}
\item If $c\in g_{a,b}$, then $G(a,b)\cap N_{\delta}(c)$ satisfies $\P(2M,3)$.
\item Suppose $d_{S}(a, b)\geq2r+2j+1$, where $j =3\delta+2$. If $c\in g_{a,b}$ and $c \notin N_{r+j}(a)\cup N_{r+j}(b)$, then $G(a,b;r)\cap N_{2\delta}(c)$ satisfies $\P(4M,3)$.
\end{enumerate}
\end{lemma}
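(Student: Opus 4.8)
The plan is to read both statements off Corollary \ref{c} by a pigeonhole argument, handling the subsurface $Z=S$ separately by a trivial diameter bound. Unwinding Definition \ref{defintion}, to prove that $G(a,b)\cap N_{\delta}(c)$ satisfies $\P(2M,3)$ I must show that for every $Z\subseteq S$ it contains no three curves whose projections to $Z$ are pairwise more than $2M$ apart in $C(Z)$, and similarly with $2M$ replaced by $4M$ and $N_{\delta}$ by $N_{2\delta}$ for the second statement. So the two items are structurally the same, differing only in which part of Corollary \ref{c} is invoked.

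First I would dispose of the case $Z=S$. Here $\pi_{S}$ is essentially the identity and $d_{S}$ is the ambient metric, so any two points of $N_{\delta}(c)$ are at distance at most $2\delta$ and any two points of $N_{2\delta}(c)$ at distance at most $4\delta$; since $\delta\le M$, these are bounded by $2M$ and $4M$, so not even a single pair can be more than $2M$ (resp. $4M$) apart. Thus $\P(2M,3,S)$ and $\P(4M,3,S)$ hold for free. Then I would fix $Z\subsetneq S$ and argue by contradiction for the first statement: suppose $x_{1},x_{2},x_{3}\in G(a,b)\cap N_{\delta}(c)$ are pairwise more than $2M$ apart in $C(Z)$. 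Because the diameter of a single subsurface projection is bounded by a small universal constant, hence by $2M$, each $x_{i}$ must project nontrivially to $Z$. Corollary \ref{c}(1) then gives, for each $i$, that $d_{Z}(a,x_{i})\leq M$ or $d_{Z}(x_{i},b)\leq M$. By pigeonhole two of the three, say $x_{1},x_{2}$, satisfy the same alternative; without loss of generality $d_{Z}(a,x_{1})\leq M$ and $d_{Z}(a,x_{2})\leq M$. The triangle inequality for the diameter distance $d_{Z}$ gives $d_{Z}(x_{1},x_{2})\leq d_{Z}(x_{1},a)+d_{Z}(a,x_{2})\leq 2M$, contradicting the assumed separation. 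The second statement is identical, using Corollary \ref{c}(2) whose bound is $2M$ rather than $M$, so the pigeonhole yields $d_{Z}(x_{1},x_{2})\leq 2M+2M=4M$, again a contradiction.

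The one point I expect to be the real obstacle, and would verify carefully, is the triangle inequality $d_{Z}(x_{1},x_{2})\leq d_{Z}(x_{1},a)+d_{Z}(a,x_{2})$. Since $d_{Z}$ is the diameter of a union of projections, this estimate is meaningful only when the common endpoint $a$ projects nontrivially to $Z$; otherwise $d_{Z}(a,x_{i})$ is merely the diameter of $\pi_{Z}(x_{i})$ and says nothing relating $x_{1}$ to $x_{2}$. Fortunately this is exactly what the bounds in Corollary \ref{c} supply: via Lemma \ref{ti} and Theorem \ref{BGIT}, an inequality of the form $d_{Z}(a,x_{i})\leq M$ is obtained by applying the Bounded Geodesic Image Theorem to a (tight) multigeodesic terminating at $a$, all of whose vertices, including $a$ itself, project nontrivially to $Z$; symmetrically when the active endpoint is $b$. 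Hence whenever two of the curves share an alternative, they are genuinely close to a common, nontrivially projecting endpoint, and the triangle inequality applies. Once this is observed, the lemma follows immediately from Corollary \ref{c}.
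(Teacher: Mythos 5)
Your proof is correct and follows essentially the same route as the paper: the paper also disposes of $Z=S$ by the trivial diameter bound (using $M>4\delta$ and Lemma \ref{bbb}), and for $Z\subsetneq S$ it restates Corollary \ref{c} as the containment $\pi_{Z}\big(G(a,b)\cap N_{\delta}(c)\big)\subseteq N_{M}^{C(Z)}(\pi_{Z}(a))\cup N_{M}^{C(Z)}(\pi_{Z}(b))$, two sets of diameter at most $2M$, which is exactly your pigeonhole-plus-triangle-inequality argument in contrapositive form. Your additional check that the endpoint realizing each bound in Corollary \ref{c} projects nontrivially to $Z$ (so that the triangle inequality through it is legitimate) addresses a degenerate case the paper passes over silently, and your justification of it via the BGIT arguments inside Lemma \ref{ti} and Corollary \ref{c} is sound.
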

\begin{proof}
We only prove the first statement; the same proof works for the second statement.

First, we observe that $G(a,b)\cap N_{\delta}(c)$ satisfies $\P(2 \delta ,2,S)$. Since $M>4\delta$ \cite{WEB1}, with Lemma \ref{bbb}, we observe that $G(a,b)\cap N_{\delta}(c)$ satisfies $\P(2M,3,S)$.

Now, we show that $G(a,b)\cap N_{\delta}(c)$ satisfies $\P(2M,3,Z)$ for all $Z\subsetneq S$. Suppose $y\in C(Z)$ and $p\geq 0$; we define $$N_{p}^{C(Z)}(y)=\{w\in C(Z)| d_{Z}(y,w)\leq p\}.$$
Corollary \ref{c} states that $$ \pi_{Z}(G(a,b)\cap N_{\delta}(c))           \subseteq  N_{M}^{C(Z)}(  \pi_{Z}(a)) \cup N_{M}^{C(Z)}(  \pi_{Z}(b)).$$

Since the projection of every element of $G(a,b)\cap N_{\delta}(c)$ to $Z$ is contained in $N_{M}^{C(Z)}(  \pi_{Z}(a))$ or $N_{M}^{C(Z)}(  \pi_{Z}(b)),$ which are the balls of diameter $2M$ in $C(Z)$ (see Figure \ref{1000}), we conclude that $G(a,b)\cap N_{\delta}(c)$ does not contain ``$3$'' elements whose projections to $Z$ are mutually more than ``$2M$'' apart in $C(Z)$. 

\begin{figure}[htbp]
 \begin{center}
  \includegraphics[width=90mm]{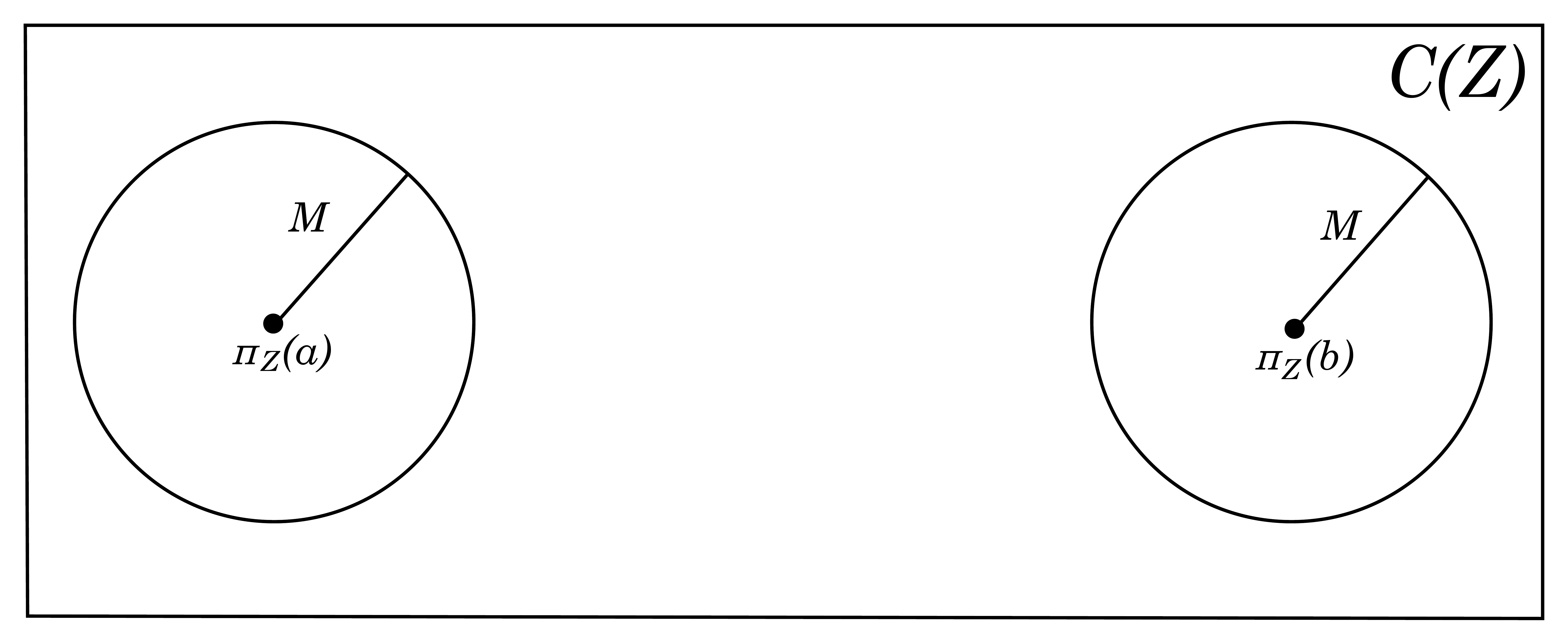}
 \end{center}
 \caption{$N_{M}^{C(Z)}(  \pi_{Z}(a))$ and $N_{M}^{C(Z)}(  \pi_{Z}(b))$ in $C(Z)$.}
 \label{1000}
\end{figure}

\end{proof}

\begin{remark}
By Lemma \ref{cno3}, we observe that Theorem \ref{B} follows from Theorem \ref{A} with $k=3$.
\end{remark}

\section{\textbf{The proof of Theorem \ref{A}}}
We prove Theorem \ref{A} by a double induction on the complexity and the distance. We first show for $S_{1,1}$ and $S_{0,4}$; the curve graphs of them are Farey graphs whose vertices are identified with $\mathbb{Q}\cup \{\frac{1}{0}=\infty\}$. See \cite{FAR}.
Suppose $x\in C(S)$; we define $C_{i}(x)=\{y\in C(S)|d_{S}(x,y)=i\}.$ 




We observe the following lemma which is the heart of the proof of Theorem \ref{A}.

\begin{lemma} \label{co} Let $x\in C(S)$ and $B\subseteq C_{i}(x)$ for $i>1$.

\begin{itemize}
\item Suppose $\x(S)=1$. Let $Z=R(x)$, i.e., $Z$ is the annulus whose core curve is $x$. If $B$ satisfies $\P(l,k,Z)$, then there exist $B' \subseteq C_{1}(x)$ such that $\d B\subseteq \bigcup_{y \in B'}C_{i-1}(y)$ and $B' $ satisfies $\P(l+2M,k,Z)$.

\item Suppose $\x(S)>1$. Let $Z \subseteq S-x$. If $B$ satisfies $\P(l,k,Z)$, then there exist $B' \subseteq C_{1}(x)$ such that $\d B\subseteq \bigcup_{y \in B'} C_{i-1}(y)$ and $B'$ satisfies $\P(l+2M,k,Z)$.

\end{itemize}
\end{lemma}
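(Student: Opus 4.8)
The plan is to build $B'$ by sliding each element of $B$ one step toward $x$ along a \emph{tight} geodesic, and then to argue that this one-step move perturbs the projection to $Z$ by at most the bounded geodesic image constant $M$. Concretely, for each $b\in B$ I would choose a tight geodesic $g_{x,b}^{t}=v_{0},v_{1},\dots,v_{i}$ with $v_{0}=x$ and $v_{i}=b$ (which exists by Theorem \ref{MMMM}) and set $y_{b}:=v_{1}$. Then $y_{b}\in C_{1}(x)$, and since $v_{1},\dots,v_{i}$ is itself a geodesic we have $d_{S}(y_{b},b)=i-1$, i.e. $b\in C_{i-1}(y_{b})$. Taking $B':=\{y_{b}\mid b\in B\}\subseteq C_{1}(x)$ gives $B\subseteq\bigcup_{y\in B'}C_{i-1}(y)$ immediately, so the whole content of the lemma reduces to checking that $B'$ satisfies $\P(l+2M,k,Z)$.

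The heart of the argument is the estimate: \emph{if $\pi_{Z}(y_{b})\neq\emptyset$, then $d_{Z}(y_{b},b)\leq M$.} The first observation is that $\pi_{Z}(x)=\emptyset$ in both cases, since $x$ is the core of $Z=R(x)$ when $\x(S)=1$ and $Z\subseteq S-x$ when $\x(S)>1$, so $x$ is disjoint from $Z$ and misses it. Let $V_{0},\dots,V_{i}$ be a tight multigeodesic with $v_{j}\in V_{j}$. When $\x(S)=1$, every curve other than $x$ crosses $x$, so $\pi_{Z}(v_{j})\neq\emptyset$ for all $j\geq1$, and Theorem \ref{BGIT} applied to the multigeodesic $v_{1},\dots,v_{i}$ gives $d_{Z}(y_{b},b)=d_{Z}(v_{1},v_{i})\leq M$ directly. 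When $\x(S)>1$, I would invoke Lemma 4.10 of \cite{MM2}: the set of indices $j$ with $\pi_{Z}(V_{j})=\emptyset$ is contiguous; it contains $0$ because $\pi_{Z}(x)=\emptyset$, and it omits $1$ by the hypothesis $\pi_{Z}(y_{b})=\pi_{Z}(v_{1})\neq\emptyset$, so it is exactly $\{0\}$. Hence $\pi_{Z}(V_{j})\neq\emptyset$ for all $j\geq1$, and Theorem \ref{BGIT} applied to $V_{1},\dots,V_{i}$ (together with $v_{1}\in V_{1}$, $b\in V_{i}$) yields $d_{Z}(y_{b},b)\leq d_{Z}(V_{1},V_{i})\leq M$.

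Finally I would deduce $\P(l+2M,k,Z)$ by the triangle inequality for $d_{Z}$. Suppose $B'$ contained $k$ elements $y_{1},\dots,y_{k}$ whose projections to $Z$ are mutually more than $l+2M$ apart. A single curve projecting trivially cannot be more than $l+2M$ from anything, since the diameter of the projection of one curve is far below $l+2M$; hence each $y_{s}$ projects nontrivially, and I may choose preimages $b_{s}\in B$ with $y_{s}=y_{b_{s}}$, the $b_{s}$ being distinct because the $y_{s}$ are. The key estimate gives $d_{Z}(y_{s},b_{s})\leq M$, so
$$d_{Z}(b_{s},b_{t})\geq d_{Z}(y_{s},y_{t})-d_{Z}(y_{s},b_{s})-d_{Z}(b_{t},y_{t})>(l+2M)-M-M=l$$
for all $s\neq t$, contradicting that $B$ satisfies $\P(l,k,Z)$. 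Therefore no such $k$ elements exist and $B'$ satisfies $\P(l+2M,k,Z)$.

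I expect the main obstacle to be the step for $\x(S)>1$: justifying through the footprint lemma that once the single vertex $y_{b}=v_{1}$ projects nontrivially, the \emph{entire} tail $V_{1},\dots,V_{i}$ does too, so that Theorem \ref{BGIT} is applicable to it — this is exactly where tightness of the chosen geodesic is used, and it is the only place the hypothesis $Z\subseteq S-x$ (forcing $x$ into the footprint) enters. The remaining pieces — the triangle inequality for the diameter-based $d_{Z}$ and the remark that trivially-projecting vertices are irrelevant to property $\P$ — are routine.
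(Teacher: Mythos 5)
Your proposal is correct and follows essentially the same route as the paper: define $B'$ by taking the distance-one vertex on a (tight) geodesic from $x$ to each $b\in B$, use tightness together with the Bounded Geodesic Image Theorem (via the footprint/contiguity lemma, exactly as in the paper's Lemma \ref{ti}) to get $d_{Z}(y_{b},b)\leq M$, and finish with the triangle inequality for $d_{Z}$. Your explicit treatment of the footprint argument and of vertices with empty projection merely spells out steps the paper leaves implicit.
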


\begin{proof} The proof will be the combination of tightness, Lemma \ref{ti} (for $\x(S)>1$) and the Bounded Geodesic Image Theorem, Theorem \ref{BGIT}.

\underline{Suppose $\x(S)=1$}: Let $b\in C_{i}(x)$; we observe that every vertex of $g_{x,b}\setminus \{x\}$ projects nontrivially to $R(x)$. Therefore, by letting $b'=g_{x,b}\cap C_{1}(x)$, we have $$d_{R(x)}(b',b)\leq M \text{ by Theorem \ref{BGIT}}.$$ Furthermore, let $c\in C_{i}(x)$ and $c'=g_{x,c}\cap C_{1}(x)$, then we have 
\begin{eqnarray*}
d_{R(x)}(b',c')&\leq& d_{R(x)}(b',b)+d_{R(x)}(b,c)+d_{R(x)}(c,c')\\&\leq& d_{R(x)}(b,c)+2M.
\end{eqnarray*}
See Figure \ref{Fig3}. 

\begin{figure}[htbp]
 \begin{center}
  \includegraphics[width=50mm]{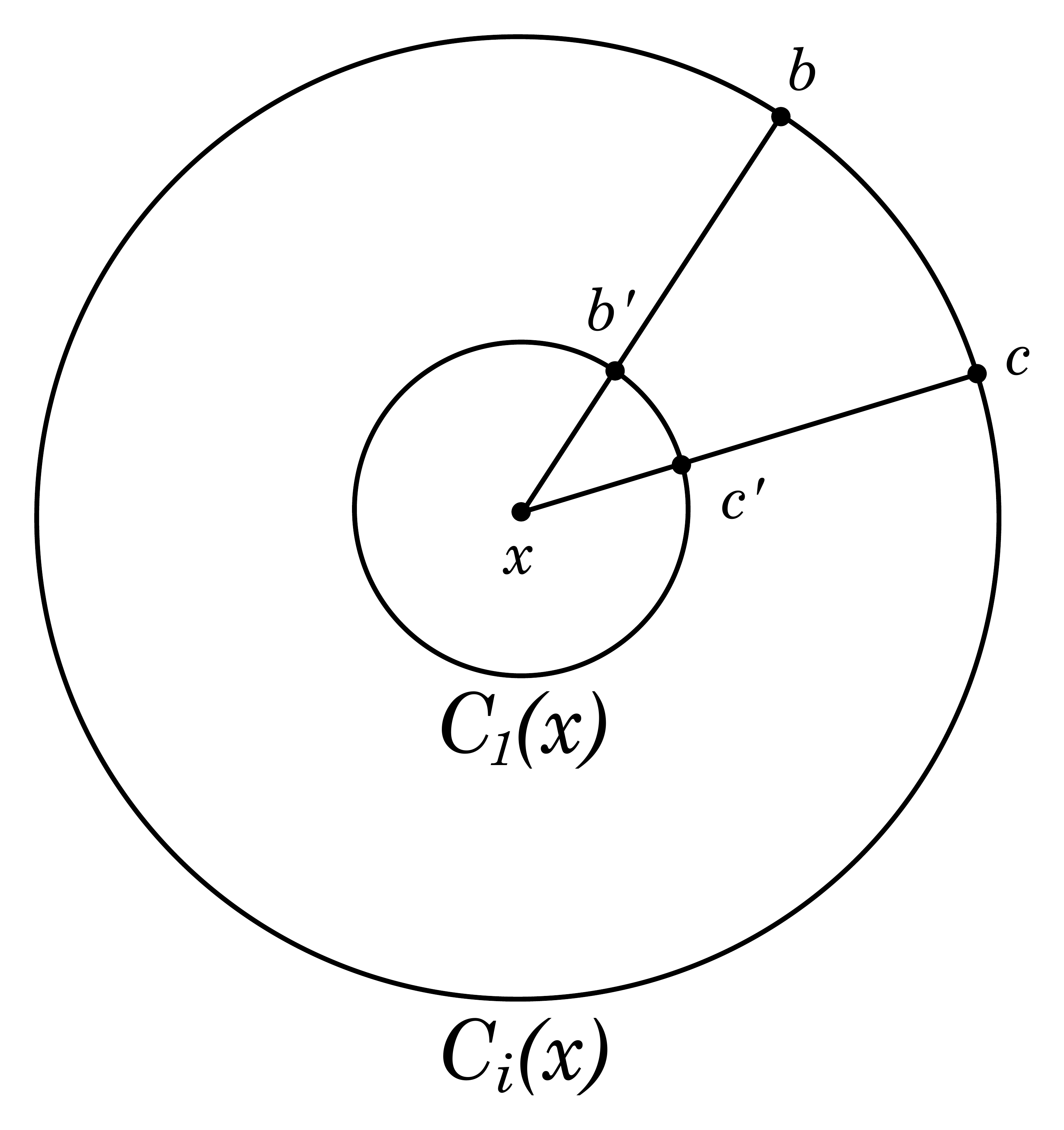}
 \end{center}
 \caption{$d_{R(x)}(b',c')\leq d_{R(x)}(b,c)+2M.$}
 \label{Fig3}
\end{figure}

Define $\d B'=\bigcup_{w\in B} g_{x,w}\cap C_{1}(x).$ Since $B$ satisfies $\P(l,k,Z)$, by the above observation, we conclude that $B'$ satisfies $\P(l+2M,k,Z)$. Lastly, by the definition of $B'$, we observe $\d B\subseteq \bigcup_{y \in B'} C_{i-1}(y).$

\underline{Suppose $\x(S)>1$}: The proof is analogous to the proof of the previous case; we need to take tight geodesics instead of geodesics. 

Let $b\in C_{i}(x)$, and take $g_{x,b}^{t}$; if $b'= g_{x,b}^{t}\cap C_{1}(x)$ projects nontrivially to $Z \subseteq S-x$, then since $\pi_{Z}(x)=\emptyset$, we have $$d_{Z}(b',b)\leq M \text{ by tightness and Theorem \ref{BGIT}}.$$

As in the previous case, if we define $\d B'=\bigcup_{w\in B} g_{x,w}^{t}\cap C_{1}(x)$, then $B'$ satisfies $\P(l+2M,k,Z)$ and we also have $\d B\subseteq \bigcup_{y \in B'} C_{i-1}(y).$
\end{proof}

Now, we prove Theorem \ref{A} for $\x(S)=1$. For simplicity, we prove it for $S=S_{1,1}.$

\begin{theorem}\label{1}
Suppose $S=S_{1,1}$ and $A\subseteq C(S)$. Let $l>0$ and $k>1$. If $A$ satisfies $\P(l,k)$, then $|A|\leq (Lk)^{l+1}\text{ where }L=l+2M+2.$

\end{theorem}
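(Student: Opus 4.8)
The plan is to combine two counting mechanisms: a coarse covering of $A$ coming from the property $\P(l,k,S)$ for the whole surface, and a fine ``shell'' count around each resulting center coming from Lemma \ref{co} together with the line structure of the annular complex of the Farey graph. Throughout I use that $\P(l,k)$ entails both $\P(l,k,S)$ and $\P(l,k,R(x))$ for every curve $x$, and that every subset of $A$ inherits $\P(l,k)$.

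First I would exploit $\P(l,k,S)$. Since $A$ contains no $k$ curves that are mutually more than $l$ apart in $d_{S}$, choose a maximal subset $\{x_{1},\dots,x_{m}\}\subseteq A$ with $d_{S}(x_{i},x_{j})>l$ for $i\ne j$; then $m\le k-1$, and by maximality every $z\in A$ lies in $N_{l}(x_{i})$ for some $i$. This is exactly the inductive step of Example \ref{firstsection}(4) applied to the metric $d_{S}$, and it yields $A\subseteq\bigcup_{i=1}^{m}N_{l}(x_{i})$ with $m\le k-1$. Thus it suffices to bound $|A\cap N_{l}(x)|$ for a single center $x\in A$.

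Second, fix $x$ and recall $C_{j}(x)=\{y:d_{S}(x,y)=j\}$. I would prove $|A\cap C_{j}(x)|\le (Lk)^{j}$ by induction on $j$, which is the heart of the argument. For $j=0$ this is clear, and for $j=1$ the set $A\cap C_{1}(x)$ lies in the link of $x$, whose image under $\pi_{R(x)}$ is (coarsely) a line in $C(R(x))$; since $A\cap C_{1}(x)$ satisfies $\P(l,k,R(x))$, the line count of Example \ref{firstsection}(3) gives $|A\cap C_{1}(x)|\le (l+2)k\le Lk$. For $j\ge 2$, apply Lemma \ref{co} with $Z=R(x)$ to $B=A\cap C_{j}(x)$: it produces $B'\subseteq C_{1}(x)$ with $B\subseteq\bigcup_{y\in B'}C_{j-1}(y)$ and with $B'$ satisfying $\P(l+2M,k,R(x))$. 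The line count again bounds $|B'|\le (l+2M+2)k=Lk$, and each $A\cap C_{j-1}(y)$ obeys the inductive hypothesis $|A\cap C_{j-1}(y)|\le (Lk)^{j-1}$. Summing over the at most $Lk$ fibers gives $|B|\le (Lk)^{j}$.

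Finally I would assemble the pieces. Each ball contributes $|A\cap N_{l}(x_{i})|=\sum_{j=0}^{l}|A\cap C_{j}(x_{i})|\le\sum_{j=0}^{l}(Lk)^{j}$, so
$$|A|\le (k-1)\sum_{j=0}^{l}(Lk)^{j}=(k-1)\,\frac{(Lk)^{l+1}-1}{Lk-1}\le (Lk)^{l+1},$$
where the last inequality uses $\tfrac{k-1}{Lk-1}\le 1$ (equivalently $L\ge 1$), which absorbs the factor $k-1$ from the covering. I expect the main obstacle to be the inductive shell bound of the second step: one must feed the $l\mapsto l+2M$ loss of Lemma \ref{co} into the link count so that the branching factor stays exactly $Lk$, and one must check that the line estimate of Example \ref{firstsection}(3) genuinely applies to the image of $C_{1}(x)$ under the annular projection $\pi_{R(x)}$. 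The covering step and the final summation are then routine.
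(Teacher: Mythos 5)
Your proposal is correct and follows essentially the same route as the paper: cover $A$ by at most $k-1$ balls $N_{l}(x_{i})$ using $\P(l,k,S)$, bound each shell $A\cap C_{j}(x)$ by $(Lk)^{j}$ via induction with Lemma \ref{co} driving the inductive step, and use the Dehn-twist/Farey structure of $C_{1}(x)$ (Lemma \ref{min}) for the base case --- which is exactly how the paper settles the point you flagged about the line estimate applying to $\pi_{R(x)}(C_{1}(x))$. The only cosmetic difference is that you make the covering step explicit via a maximal $l$-separated subset, which the paper leaves implicit.
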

\begin{proof}

Since $A$ satisfies $\P(l,k,S)$, $A$ does not contain $k$ curves which are mutually more than $l$ apart in $C(S)$; which means we can pick $\{x_{i}\}_{i=1}^{k-1}\subseteq C(S)$ such that $$\d A\subseteq \bigcup_{1\leq i\leq k-1} N_{l}(x_{i}). \Longrightarrow  \d |A |\leq \sum_{i=1}^{k-1}  |A \cap N_{l}(x_{i})|.$$
Therefore, it suffices to understand a bound for $A \cap N_{l}(x)$ where $x\in C(S)$.

Let $L=l+2M+2$. We claim $$|A\cap C_{i}(x)|\leq (Lk)^{i}\text{ for all }i \leq l.$$ By this claim, we have 
\begin{eqnarray*}
|A |\leq (k-1)\cdot |A \cap N_{l}(x)|&=& (k-1)\cdot \Bigg|\bigcup_{0\leq i\leq l}A \cap C_{i}(x) \Bigg|\\\\&=&(k-1)\cdot \bigg( \sum_{i=0}^{l} |A\cap C_{i}(x)| \bigg)\\\\&\leq& (k-1)\cdot  \bigg(\sum_{i=0}^{l} (Lk)^{i}  \bigg)  \\\\&=& (k-1)\cdot \bigg( \frac{(Lk)^{l+1}-1}{Lk-1} \bigg) \\\\&\leq& (Lk)^{l+1}.
\end{eqnarray*}

\emph{Proof of the claim.} We prove it by the induction on $i$.

\underline{Base case, $i=1$}: We recall that if $\frac{s}{t},\frac{p}{q}\in C(S_{1,1}),$ then $i \big(\frac{s}{t},\frac{p}{q} \big)=|sq-tp|.$ We may assume $x=\frac{1}{0}$, then $C_{1}(x)=\mathbb{Z}.$
Let $T_{x}$ be Dehn twist along $x$; by Lemma \ref{min}, if $y\in C_{1}(x)$, then we have $$d_{R(x)}(T_{x}^{i}(y),y)=|i|+2.$$ We also notice that $$C_{1}(x)=\mathbb{Z}=\{T_{x}^{i}(y) \}_{i\in \mathbb{Z}}.$$ Therefore, as in Example \ref{firstsection}, we have $$|A\cap C_{1}(x)|\leq (l+2)k.$$

\underline{Inductive step, $i>1$}: Let $B=A\cap C_{i}(x)$. By our hypothesis, $A$ satisfies $\P(l,k)$; in particular, $A$ satisfies $\P(l,k,R(x))$ since $R(x)\subseteq S$. Lastly, because $B\subseteq A$, we observe that $$\text{$B$ satisfies $\P(l,k,R(x))$.}$$

Now, we use Lemma \ref{co}; since $B$ satisfies $\P(l,k,R(x))$, there exists $B' \subseteq C_{1}(x)$ such that $\d B\subseteq \bigcup_{y\in B'} C_{i-1}(y)$ and $B'$ satisfies $\P(l+2M,k,R(x))$.
Since $B\subseteq A$, we observe $$\d B\subseteq \bigcup_{y\in B'} \big( A \cap C_{i-1}(y) \big). \Longrightarrow  |B| \leq |B'| \cdot | A\cap C_{i-1}(y)|.$$
Here, we run the induction. By the base case, we have $$|B'|\leq (l+2M+2)k\leq Lk.$$ With our inductive hypothesis, we have $$|B| \leq |B'| \cdot | A\cap C_{i-1}(y)| \leq (Lk) \cdot(Lk)^{i-1}\leq (Lk)^{i}.$$
\end{proof}
\begin{remark}
For $S_{0,4}$, the same argument works; the only difference is that we use $H_{x}$ (the half twist along $x$) instead of $T_{x}$ on the base case.  Assuming the same setting in Theorem \ref{1}, we have $$|A\cap C_{i}(x)|\leq (2Lk)^{i} \text{ for all }i \leq l. \Longrightarrow N_{S_{0,4}}(l,k)=(2Lk)^{l+1}.$$
\end{remark}

Now, we complete the proof of Theorem \ref{A}. The nature of the proof is similar to the above proof; we define $$\d N_{S'}(l,k)=\max \{ N_{S_{g,n}}(l,k)| \x(S_{g,n})<\x(S)\}.$$
\begin{theorem}
Suppose $\x(S)> 1$ and $A\subseteq C(S)$. Let $l>0$ and $k>1$. If $A$ satisfies $\P(l,k)$, then $|A|\leq (2N_{S'}(L,k))^{l+1} \text{ where }L=l+2M.$
\end{theorem}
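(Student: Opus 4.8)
The plan is to mirror the proof of Theorem \ref{1} almost verbatim, with one structural change: the base case of the inner (distance) induction, which for $\x(S)=1$ used Dehn twists and annular projections, is now replaced by an appeal to the complexity induction hypothesis. Writing $r=2N_{S'}(L,k)$ with $L=l+2M$, I would prove the central claim that for every center $x\in C(S)$ and every $0\le i\le l$,
$$|A\cap C_{i}(x)|\le r^{i},$$
by induction on $i$. Granting the claim, the global bound follows exactly as before: since $A$ satisfies $\P(l,k,S)$, a maximal subset of $A$ that is pairwise more than $l$ apart in $C(S)$ has at most $k-1$ elements, so $A$ is covered by balls $N_{l}(x_{1}),\dots,N_{l}(x_{k-1})$, giving $|A|\le (k-1)\sum_{j=0}^{l} r^{j}\le r^{l+1}$, where the last inequality uses $r\ge k$.

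For the base case $i=1$, I would observe that $A\cap C_{1}(x)$ consists of curves disjoint from $x$, hence lying in the complement $S-x$, whose components each have complexity strictly less than $\x(S)$. Because $A$ satisfies $\P(l,k,Z)$ for every $Z\subseteq S$, in particular for every $Z$ contained in a component $W$ of $S-x$, the set $A\cap C_{1}(x)\cap C(W)$ satisfies $\P(l,k)$ inside $C(W)$; applying Theorem \ref{A} to $W$ (complexity induction) bounds its cardinality by $N_{W}(l,k)\le N_{S'}(L,k)$, using monotonicity in the first argument (Lemma \ref{bbb}). Summing over the at most two components of $S-x$ yields $|A\cap C_{1}(x)|\le 2N_{S'}(L,k)=r=r^{1}$. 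This is precisely where the factor of $2$ enters and propagates into the exponent $r^{l+1}$.

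For the inductive step $i>1$, set $B=A\cap C_{i}(x)$, which satisfies $\P(l,k,Z)$ for all $Z$. Applying Lemma \ref{co} (the $\x(S)>1$ case) for each $Z\subseteq S-x$ produces a single set $B'=\bigcup_{w\in B} g_{x,w}^{t}\cap C_{1}(x)\subseteq C_{1}(x)$ — crucially independent of $Z$ — with $B\subseteq\bigcup_{y\in B'}C_{i-1}(y)$ and $B'$ satisfying $\P(l+2M,k,Z)=\P(L,k,Z)$ for every such $Z$. Since $B'\subseteq C(S-x)$, the same descent-to-lower-complexity argument as in the base case gives $|B'|\le 2N_{S'}(L,k)=r$. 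Finally, $|B|\le\sum_{y\in B'}|A\cap C_{i-1}(y)|\le |B'|\cdot\max_{y}|A\cap C_{i-1}(y)|\le r\cdot r^{i-1}=r^{i}$ by the inner induction hypothesis applied at the new centers $y\in B'$.

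The main obstacle I anticipate is the bookkeeping around the two subsurface phenomena at $i=1$: that $A\cap C_{1}(x)$ genuinely lives in $C(S-x)$ rather than $C(S)$, that $S-x$ may be disconnected (forcing the factor $2$, and noting that components of complexity $\le 0$ carry no essential curves and can be ignored), and that each surviving component has complexity $<\x(S)$ so Theorem \ref{A} applies. A secondary but essential point is that Lemma \ref{co} must be invoked with a $B'$ that does not depend on $Z$, so that $B'$ simultaneously inherits $\P(L,k)$ for all subsurfaces of $S-x$ and the complexity induction becomes applicable; relatedly, the inner induction must be stated uniformly over all centers, since the inductive step passes from the center $x$ to the new centers $y\in B'$.
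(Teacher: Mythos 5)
Your proposal is correct and is essentially the paper's own proof: the same double induction (outer on complexity, inner on the distance $i$), the same covering of $A$ by $k-1$ balls $N_{l}(x_{j})$ using $\P(l,k,S)$, the same base case via descent to the components of $S-x$ (with the factor $2$ arising exactly as you describe), and the same inductive step via Lemma \ref{co} with $B'=\bigcup_{w\in B}g^{t}_{x,w}\cap C_{1}(x)$ bounded by the complexity induction. Your explicit remarks --- that $B'$ must be independent of $Z$, that the claim must be uniform over all centers, and that trivial components of $S-x$ are discarded --- are points the paper leaves implicit but handles identically.
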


\begin{proof}

Since $A$ satisfies $\P(l,k,S)$, it suffices to understand a bound for $A \cap N_{l}(x)$ where $x\in C(S)$.

Let $L=l+2M$. We claim $$|A\cap C_{i}(x)|\leq (2N_{S'}(L,k))^{i}\text{ for all }i \leq l.$$ By this claim, we have 
\begin{eqnarray*}
|A |\leq (k-1)\cdot |A \cap N_{l}(x)| &=&(k-1)\cdot \bigg( \sum_{i=0}^{l} |A\cap C_{i}(x)| \bigg)\\\\&\leq& (k-1)\cdot \bigg(\sum_{i=0}^{l}  (2N_{S'}(L,k))^{i}  \bigg) \\\\&=& (k-1)\cdot \bigg( \frac{(2N_{S'}(L,k))^{l+1}-1}{2N_{S'}(L,k)-1} \bigg) \\\\&\leq &(2N_{S'}(L,k))^{l+1}.
\end{eqnarray*}

\emph{Proof of the claim.} We prove it by the induction on $i$.

\underline{Base case, $i=1$}: Let $S-x=\{S_{1},S_{2}\}$. We may assume $\x(S_{1}),\x(S_{2})\geq 1$ since $C(S_{0,3})=\emptyset$. Note that $S-x$ may have only one component. 

We may think of the elements in $A\cap C_{1}(x)$ as the elements in $C(S_{1}) \cup C(S_{2})$. Therefore, we have 
$$|A\cap C_{1}(x)|=|\big( A\cap C_{1}(x) \big) \cap C(S_{1})|+ |\big( A\cap C_{1}(x) \big) \cap C(S_{2})|.$$
Since $A\cap C_{1}(x)$ satisfies $\P(l,k)$, we observe that for both $i\in \{1,2\}$, $\big( A\cap C_{1}(x) \big) \cap C(S_{i})$ satisfies $\P(l,k,Z)$ for all $Z\subseteq S_{i}$. Therefore, by using the inductive hypothesis on the complexity, we have $$|\big( A\cap C_{1}(x) \big) \cap C(S_{i})|\leq N_{S_{i}}(l,k) \text{ for both } i\in \{1,2\}.$$
Hence, we have $$|A\cap C_{1}(x)|\leq N_{S_{1}}(l,k)+N_{S_{2}}(l,k)\leq 2N_{S'}(l,k) .$$


\underline{Inductive step, $i>1$}: Let $B=A\cap C_{i}(x)$. Since $B$ satisfies $\P(l,k,Z)$ for all $Z\subseteq S-x$, by Lemma \ref{co}, there exists $B'\subseteq C_{1}(x)$ such that $\d B\subseteq \bigcup_{y\in B'} C_{i-1}(y)$ and $B'$ satisfies $\P(l+2M,k,Z)$ for all $Z\subseteq S-x$. Therefore, we have 
\begin{eqnarray*}
|B|\leq |B'| \cdot | A\cap C_{i-1}(y)|&\leq& 2N_{S'}(l+2M,k) \cdot(2N_{S'}(L,k))^{i-1} \\\\&=&(2N_{S'}(L,k))^{i}.
\end{eqnarray*}

\end{proof}

We understand the growth of $N_{S}(l,k)$.

\begin{eqnarray*}
N_{S}(l,k)= (2N_{S'}(L,k))^{l+1} &=& 2^{l+1} \cdot N_{S'}(L,k)^{l+1}\\\\&\leq &N_{S'}(L,k)^{2(l+1)}\\\\&\leq &N_{S'}(L,k)^{2L}.
\end{eqnarray*}

One can check that 
\begin{eqnarray*}
N_{S'}(L,k)^{2L} &\leq& N_{S_{0,4}}(l+\x(S)\cdot 2M,k )^{ \bigg(  \big(2\cdot(l+\x(S)\cdot 2M)\big) ^{\x(S)}\bigg)} \\\\&\leq & N_{S_{0,4}}(\x(S)\cdot L,k )^{ \big(  (2\cdot \x(S)\cdot L)) ^{\x(S)}\big)}.
\end{eqnarray*}

\section{Weak tight geodesics}
We define a new class of geodesics; this definition is motivated by Lemma \ref{ti}. 
\begin{definition}[Weak tight geodesics]\label{dokoni}
Suppose $\x(S)\geq 1.$ Let $x,y \in C(S)$ such that $d_{S}(x,y)>2$. We say a geodesic $g_{x,y}$ is a $D$--weakly tight geodesic if the following holds for all $v\in g_{x,y}$ and $Z\subsetneq S$. If $\pi_{Z}(v)\neq \emptyset$, then $$d_{Z}(x,v)\leq D \text{ or }d_{Z}(v,y)\leq D.$$
\end{definition}

We remark the following regarding Definition \ref{dokoni}. 
\begin{remark}\label{2016}
\begin{itemize}

\item We need to assume $d_{S}(x,y)>2$, so that $\pi_{Z}(x)\neq \emptyset \text{ or }\pi_{Z}(y)\neq \emptyset \text{ for all }Z\subsetneq S.$

\item Every geodesic between $x$ and $y$ is a $D$--weakly tight geodesic for some $D$. This is because of Lemma 5.3 in the paper of Bestvina--Bromberg--Fujiwara \cite{BBF}, which states that given $a,b\in C(S)$ there are only finitely many subsurfaces $Z$ with $d_{Z}(a,b)>3$, i.e., there are only finitely many subsurfaces to check.

\item Every $D$--weakly tight geodesic is a $D'$--weakly tight geodesic whenever $D\leq D'$.
\end{itemize}

\end{remark}

By Lemma \ref{ti}, we have 

\begin{corollary}\label{2015}
Every tight geodesic is an $M$--weakly tight geodesic. 
\end{corollary}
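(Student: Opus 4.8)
The plan is to observe that the statement is an immediate unpacking of Lemma \ref{ti}, with no additional work required. Fix a tight geodesic $g_{x,y}^{t}$ between $x,y\in C(S)$ with $d_{S}(x,y)>2$; I must verify that it meets the defining condition of an $M$--weakly tight geodesic from Definition \ref{dokoni}. That condition quantifies over every vertex $v\in g_{x,y}^{t}$ and every proper subsurface $Z\subsetneq S$, requiring that whenever $\pi_{Z}(v)\neq\emptyset$ one has $d_{Z}(x,v)\leq M$ or $d_{Z}(v,y)\leq M$.

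First I would fix such a $v$ and $Z$ and assume $\pi_{Z}(v)\neq\emptyset$. Since $g_{x,y}^{t}$ is a tight geodesic and $Z\subsetneq S$, Lemma \ref{ti} applies verbatim and yields exactly $d_{Z}(x,v)\leq M$ or $d_{Z}(v,y)\leq M$. As $v$ and $Z$ were arbitrary, this establishes the required dichotomy for all vertices and all proper subsurfaces, so $g_{x,y}^{t}$ is $M$--weakly tight.

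The only point worth checking is that the hypotheses line up: Definition \ref{dokoni} imposes $d_{S}(x,y)>2$ (to guarantee, via the first item of Remark \ref{2016}, that at least one of $x,y$ projects nontrivially to each $Z\subsetneq S$), whereas Lemma \ref{ti} carries no such restriction. Since the lemma holds unconditionally for tight geodesics, the extra hypothesis in the definition costs nothing here. Consequently there is no genuine obstacle: the entire content of the corollary was already absorbed into Lemma \ref{ti}, whose proof in turn rested on tightness (the footprint being a run of at most three contiguous vertices, Lemma 4.10 of \cite{MM2}) together with the Bounded Geodesic Image Theorem, Theorem \ref{BGIT}.
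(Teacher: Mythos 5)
Your proof is correct and is exactly the paper's argument: the corollary is stated there as an immediate consequence of Lemma \ref{ti}, with no further proof given. Your additional remark that the hypothesis $d_{S}(x,y)>2$ in Definition \ref{dokoni} poses no obstacle (since Lemma \ref{ti} holds unconditionally) is a fair point of care but changes nothing substantive.
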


We define similar notations for the slices on $D$-weakly tight geodesics.
\begin{definition}
Suppose $a,b\in C(S)$, $A,B\subseteq C(S)$, and $r>0$. 
\begin{itemize}
\item Let $\L_{WT}^{D}(a,b)$ be the set of all $D$--weakly tight geodesics between $a$ and $b$. 

\item Let $\d G^{D}(a,b)=\{v\in C(S)| v\in g\in \L_{WT}^{D}(a,b)\}.$ 

\item Let $\d G^{D}(A,B)=\bigcup_{a\in A, b\in B} G^{D}(a,b)$ and $G^{D}(a,b;r)=G^{D}(N_{r}(a), N_{r}(b))$. 
\end{itemize}
\end{definition}

We observe

\begin{theorem}\label{weaktight}
\it{Suppose $\x(S)\geq1$. Let $a,b\in C(S)$, $r\geq 0$, and $D\geq M$.}
\begin{enumerate}
\item \it{If $c\in g_{a,b}$, then $ |G^{D}(a,b)\cap N_{\delta}(c)| \leq N_{S}(2D,3).$}

\item \it{Suppose $d_{S}(a, b)\geq2r+2j+1$, where $j =3\delta+2$. If $c\in g_{a,b}$ and $c \notin N_{r+j}(a)\cup N_{r+j}(b)$, then $|G^{D}(a,b;r)\cap N_{2\delta}(c)|\leq N_{S}(2(D+M),3).$}
\end{enumerate}
\end{theorem}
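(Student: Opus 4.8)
The plan is to observe that Definition \ref{dokoni} is engineered so that a $D$--weakly tight geodesic satisfies the conclusion of Lemma \ref{ti} verbatim, but with the constant $M$ replaced by $D$. Consequently I would re-run the entire argument of $\S 4$---namely Corollary \ref{c}, Lemma \ref{cno3}, and the final application of Theorem \ref{A} with $k=3$---carrying $D$ through every place where tightness (via Lemma \ref{ti}) supplied the bound $M$, while leaving the Bounded Geodesic Image bounds on the short geodesics $g_{a,A}$, $g_{B,b}$ at $M$ and leaving the purely $\delta$--hyperbolic estimates untouched. The arithmetic of these two sources of error is exactly what produces the stated thresholds $2D$ and $2(D+M)$.

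For the first statement, let $x\in G^{D}(a,b)\cap N_{\delta}(c)$ with $\pi_{Z}(x)\neq\emptyset$. Since $x$ lies on a $D$--weakly tight geodesic between $a$ and $b$, Definition \ref{dokoni} gives directly $d_{Z}(a,x)\leq D$ or $d_{Z}(x,b)\leq D$; this is the analog of Corollary \ref{c}(1). Hence for every $Z\subsetneq S$ the projection $\pi_{Z}(G^{D}(a,b)\cap N_{\delta}(c))$ is contained in the union of the two balls $N_{D}^{C(Z)}(\pi_{Z}(a))$ and $N_{D}^{C(Z)}(\pi_{Z}(b))$, each of diameter $2D$, so the set satisfies $\P(2D,3,Z)$; for $Z=S$ the set has diameter at most $2\delta$, and since $D\geq M>4\delta$ Lemma \ref{bbb} upgrades this to $\P(2D,3,S)$. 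Thus $G^{D}(a,b)\cap N_{\delta}(c)$ satisfies $\P(2D,3)$, and Theorem \ref{A} with $k=3$ yields the bound $N_{S}(2D,3)$.

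For the second statement I would reproduce the Case 1 / Case 2 dichotomy of Corollary \ref{c}(2). Writing $x\in g_{A,B}$ for a $D$--weakly tight geodesic with $A\in N_{r}(a)$, $B\in N_{r}(b)$, Definition \ref{dokoni} gives $d_{Z}(A,x)\leq D$ or $d_{Z}(x,B)\leq D$. In Case 1 (both $\leq D$) the same application of Lemma \ref{distance3} and the Bounded Geodesic Image Theorem to $g_{a,A}$, $g_{B,b}$ gives $d_{Z}(a,A)\leq M$ or $d_{Z}(B,b)\leq M$, hence $d_{Z}(a,x)\leq D+M$ or $d_{Z}(x,b)\leq D+M$. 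In Case 2 (say $d_{Z}(A,x)\leq D$ and $d_{Z}(x,B)>D$) the inequality $D\geq M$ guarantees $d_{Z}(x,B)>M$, so the Bounded Geodesic Image Theorem still produces a vertex $q\in g_{x,B}$ with $\pi_{Z}(q)=\emptyset$; the $\delta$--hyperbolic claim $q\in N_{d_{S}(c,b)+3\delta}(b)$ is unchanged, forces $q\notin N_{r+2}(a)$, and the same filling argument gives $d_{Z}(a,A)\leq M$ and thus $d_{Z}(a,x)\leq D+M$. Exactly as in Lemma \ref{cno3}, this shows $G^{D}(a,b;r)\cap N_{2\delta}(c)$ satisfies $\P(2(D+M),3,Z)$ for all $Z\subsetneq S$; for $Z=S$ the diameter is at most $4\delta<2(D+M)$, so $\P(2(D+M),3,S)$ holds by Lemma \ref{bbb}. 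Theorem \ref{A} with $k=3$ then gives the bound $N_{S}(2(D+M),3)$.

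The argument involves no genuinely new obstacle: it is a constant-tracking adaptation of $\S 4$. The two points that must be verified are precisely the uses of the hypothesis $D\geq M$: first, that $d_{Z}(x,B)>D$ still implies $d_{Z}(x,B)>M$, so that the contrapositive of the Bounded Geodesic Image Theorem applies in Case 2; and second, that $D\geq M>4\delta$ keeps the $Z=S$ diameter bounds ($2\delta$ and $4\delta$) below the thresholds $2D$ and $2(D+M)$, so that Lemma \ref{bbb} can be invoked. Everything else transfers verbatim, because the only property of tight geodesics used in $\S 4$ was Lemma \ref{ti}, which for $D$--weakly tight geodesics is the defining property with $D$ in place of $M$.
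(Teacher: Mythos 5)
Your proposal is correct and follows exactly the route the paper takes: the paper's own proof of Theorem \ref{weaktight} simply says to re-run the arguments of Corollary \ref{c} and Lemma \ref{cno3} with the defining property of $D$--weakly tight geodesics replacing Lemma \ref{ti}, and then apply Theorem \ref{A} with $k=3$. Your write-up is the same argument with the constant-tracking (the $D$ versus $M$ bookkeeping, the Case 1/Case 2 dichotomy, and the two uses of $D\geq M$) made explicit, which is precisely what the paper leaves implicit.
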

\begin{proof}
By the same arguments in Corollary \ref{c} and Lemma \ref{cno3}, we observe that $G(a,b)\cap N_{\delta}(c)$ satisfies $\P(2D,3)$ for the first statement, and observe that $G(a,b;r)\cap N_{2\delta}(c)$ satisfies $\P(2(D+M),3)$ for the second statement. Now, we apply Theorem \ref{A} with $k=3$, and we are done.
\end{proof}
\begin{remark}
Even though Theorem \ref{B} states about the cardinalities of the slices on tight geodesics, we indeed counted the cardinalities of the slices on $M$--weakly tight geodesics. 
\end{remark}

We end this paper with the following questions.

Webb's bounds in Theorem \ref{webb's} is sharp; he gave lower bounds by giving examples. 
We ask the following question, which could fill up some gap between Webb's bounds in Theorem \ref{webb's} and our bounds in Theorem \ref{B}. 
\begin{question}\label{QQ2}
Are there sufficiently many $M$--weakly tight geodesics which are not tight geodesics?
\end{question}

Before we ask the last question, we observe Lemma \ref{Miyake1} and Lemma \ref{Miyake2}.
Suppose $a,b \in C(S)$; we let $\L(a,b)$ be the set of all geodesics between $a$ and $b$. 


\begin{lemma}\label{Miyake1}
Suppose $\x(S)=1.$ Let $a,b \in C(S)$. We have
$$ \L_{T}(a,b) = \L_{WT}^{M}(a,b) = \L(a,b) .$$
\end{lemma}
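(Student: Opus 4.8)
The plan is to prove both equalities at once by squeezing $\L_{WT}^{M}(a,b)$ between two sets that are already equal, exploiting the fact that $\x(S)=1$ forces the class of tight geodesics to coincide with the class of all geodesics. First I would read off from Definition \ref{e} that when $\x(S)=1$ every geodesic is, by fiat, a tight geodesic; hence $\L_{T}(a,b)=\L(a,b)$ with no argument. This already identifies the two outer terms of the asserted triple equality and reduces the lemma to controlling the middle term.

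Next I would supply the two containments pinning down $\L_{WT}^{M}(a,b)$. The inclusion $\L_{T}(a,b)\subseteq \L_{WT}^{M}(a,b)$ is exactly Corollary \ref{2015}, which states that every tight geodesic is $M$--weakly tight. The opposite inclusion $\L_{WT}^{M}(a,b)\subseteq \L(a,b)$ is immediate from Definition \ref{dokoni}, since a $D$--weakly tight geodesic is in particular a geodesic. Chaining these gives $\L(a,b)=\L_{T}(a,b)\subseteq \L_{WT}^{M}(a,b)\subseteq \L(a,b)$, so all three sets coincide.

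The only point needing care is the standing hypothesis $d_{S}(a,b)>2$ built into Definition \ref{dokoni}: weak tightness is only declared in that range. I would therefore either restrict attention to $d_{S}(a,b)>2$, or observe that for $d_{S}(a,b)\leq 2$ all three sets still trivially collapse to $\L(a,b)$, the weak--tightness condition being satisfied at the endpoints and vacuous in the interior. Beyond this bookkeeping there is no genuine obstacle; the whole content of the lemma is that $\x(S)=1$ makes ``tight'' synonymous with ``geodesic'', after which Corollary \ref{2015} forces the constant $D=M$ to be exactly the threshold at which weak tightness becomes automatic. It is worth stressing that the appearance of the \emph{specific} constant $M$ (rather than some larger $D$) is the one substantive ingredient, and it is Corollary \ref{2015} --- itself a consequence of Lemma \ref{ti} and the Bounded Geodesic Image Theorem --- that licenses it.
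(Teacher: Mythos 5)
Your proof is correct and follows essentially the same route as the paper's: the paper's one-line proof cites exactly Definition \ref{e} (which makes $\L_{T}(a,b)=\L(a,b)$ immediate when $\x(S)=1$) and Corollary \ref{2015}, with the remaining containment $\L_{WT}^{M}(a,b)\subseteq\L(a,b)$ implicit from Definition \ref{dokoni}. Your additional remark about the standing hypothesis $d_{S}(a,b)>2$ in Definition \ref{dokoni} is a bookkeeping point the paper silently elides, but it does not alter the argument.
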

\begin{proof}
It follows from Definition \ref{e} and Corollary \ref{2015}.
\end{proof}

\begin{lemma}\label{Miyake2}
Suppose $\x(S)>1$ and $D'>M$. Let $a,b \in C(S)$. We have
$$ \L_{T}(a,b) \subseteq    \L_{WT}^{M}(a,b) \subseteq   \L_{WT}^{D'}(a,b) \subseteq  \L(a,b)  = \lim_{D\rightarrow \infty} \L_{WT}^{D}(a,b) .$$
\end{lemma}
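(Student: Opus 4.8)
The plan is to verify the chain one link at a time, observing that three of the four relations are immediate from results already in hand and isolating the genuine content in the final equality.

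First I would dispatch the three inclusions. The leftmost, $\L_{T}(a,b) \subseteq \L_{WT}^{M}(a,b)$, is precisely Corollary \ref{2015}. The middle inclusion $\L_{WT}^{M}(a,b) \subseteq \L_{WT}^{D'}(a,b)$ is the monotonicity recorded in the third bullet of Remark \ref{2016}, applied with $M < D'$. The rightmost inclusion $\L_{WT}^{D'}(a,b) \subseteq \L(a,b)$ is immediate from Definition \ref{dokoni}, since a $D'$--weakly tight geodesic is in particular a geodesic between $a$ and $b$. The same monotonicity also shows that the family $\{\L_{WT}^{D}(a,b)\}_{D}$ is nested and increasing in $D$, so that $\lim_{D\to\infty}\L_{WT}^{D}(a,b)$ is simply the union $\bigcup_{D}\L_{WT}^{D}(a,b)$; I would make this interpretation explicit before proving the equality.

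The equality then reduces to two inclusions. The inclusion $\bigcup_{D}\L_{WT}^{D}(a,b)\subseteq \L(a,b)$ is immediate, since each member is contained in $\L(a,b)$ by the rightmost inclusion above. For the reverse, I must show that an arbitrary geodesic $g\in\L(a,b)$ is a $D$--weakly tight geodesic for some finite $D$; this is the only step with genuine content, and it is the claim asserted in the second bullet of Remark \ref{2016}. I would argue as follows. The geodesic $g$ has only finitely many vertices $v$, since $d_{S}(a,b)<\infty$. Fix such a $v$. By the Bestvina--Bromberg--Fujiwara finiteness result \cite{BBF}, each of the sets $\{Z\subsetneq S : d_{Z}(a,v)>3\}$ and $\{Z\subsetneq S : d_{Z}(v,b)>3\}$ is finite; consequently the set of proper subsurfaces $Z$ for which $\min\{d_{Z}(a,v),d_{Z}(v,b)\}>3$ is contained in their intersection and is therefore finite. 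Setting $D_{v}$ to be the maximum of $\min\{d_{Z}(a,v),d_{Z}(v,b)\}$ over this finite collection (and taking $D_{v}\geq 3$ as well), every proper subsurface $Z$ with $\pi_{Z}(v)\neq\emptyset$ then satisfies $d_{Z}(a,v)\leq D_{v}$ or $d_{Z}(v,b)\leq D_{v}$. Taking $D=\max_{v}D_{v}$ over the finitely many vertices $v$ yields a finite $D$ with $g\in\L_{WT}^{D}(a,b)$, completing the reverse inclusion and hence the equality.

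The main obstacle is exactly this reverse inclusion: the delicate point is extracting a single finite $D$ that works simultaneously for every vertex of $g$ and every proper subsurface of $S$. The resolution hinges on combining two finitenesses---finiteness of the vertex set of a geodesic and the \cite{BBF} finiteness of subsurfaces on which two fixed curves project far apart---together with the observation that a subsurface lying outside either of the two finite exceptional sets automatically makes the relevant minimum at most $3$, so that only the finite intersection can contribute to $D$.
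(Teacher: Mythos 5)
Your proposal is correct and follows essentially the same route as the paper, whose proof is simply a citation of Corollary \ref{2015} (for the first inclusion) and Remark \ref{2016} (for the monotonicity, the containment in $\L(a,b)$, and the \cite{BBF} finiteness argument giving the final equality). The only difference is that you unpack the second bullet of Remark \ref{2016} in full detail---interpreting the limit as a nested union and extracting a single finite $D$ from the finitely many vertices and the finitely many exceptional subsurfaces---which the paper leaves implicit.
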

\begin{proof}
It follows from Remark \ref{2016} and Corollary \ref{2015}.
\end{proof}

We ask more general questions.
\begin{question}\label{QQ3}
Suppose $\x(S)>1.$ Let $a,b \in C(S)$ and $D'>M$.
By Theorem \ref{MMMM}, we know that $\L_{T}(a,b)\neq \emptyset$. 
\begin{itemize}
\item $\{\L_{WT}^{D'}(a,b) \setminus \L_{T}(a,b) \}\neq \emptyset$? More specifically, what is the smallest $D'$ such that $\{\L_{WT}^{D'}(a,b) \setminus \L_{T}(a,b) \}\neq \emptyset$?

\item More generally, let $P,Q\in \mathbb{N}$ such that $P>Q$; $\{\L_{WT}^{P}(a,b) \setminus \L_{WT}^{Q}(a,b) \}\neq \emptyset$?

\item If the answers for the above questions are affirmative, is there a canonical way to construct $g\in \{\L_{WT}^{D'}(a,b) \setminus \L_{T}(a,b) \}$ and $g\in \{\L_{WT}^{P}(a,b) \setminus \L_{WT}^{Q}(a,b) \}$? (Masur--Minsky presented a canonical way to construct tight geodesics in \cite{MM2}.)

\end{itemize}
\end{question}

\bibliographystyle{abbrv}
\bibliography{references.bib}

\end{document}